\DeclareMathAlphabet{\mathpzc}{OT1}{pzc}{m}{it}
\newcommand{\np}{\medskip\noindent}
\newcounter{eqcounter}[section]
\renewcommand{\theeqcounter}{\arabic{section}.\arabic{eqcounter}}
\renewenvironment{equation}{\medskip\noindent\refstepcounter{eqcounter}\makebox[0pt][l]{({\bf\theeqcounter})}\begin{minipage}[b]{\textwidth}$$}{$$\end{minipage}\medskip\noindent}
\renewcommand{\geq}{\geqslant}
\renewcommand{\leq}{\leqslant}
\newcommand{\D}{\mathrm{D}}                            
\newcommand{\E}{\mathrm{E}}                            
\newcommand{\F}{\mathrm{F}}                            
\newcommand{\G}{\mathrm{G}}                            
\newcommand{\cS}{\mathcal{S}}
\newcommand{\cZ}{\mathcal{Z}}
\newcommand{\cD}{\mathcal{D}}
\def\vep{\varepsilon}
\def\ga{\mathfrak{a}}
\def\gc{\mathfrak{c}}
\def\gg{\mathfrak{g}}
\def\gh{\mathfrak{h}}
\def\gk{\mathfrak{k}}
\def\gl{\mathfrak{l}}
\def\gm{\mathfrak{m}}
\def\gp{\mathfrak{p}}
\def\gs{\mathfrak{s}}
\def\gt{\mathfrak{t}}
\def\gu{\mathfrak{u}}
\def\gz{\mathfrak{z}}
\def\ggl{\mathfrak{gl}}
\def\gsl{\mathfrak{sl}}
\def\ggl{\mathfrak{gl}}
\def\gpsl{\mathfrak{psl}}
\def\gosp{\mathfrak{osp}}
\def\gso{\mathfrak{so}}
\def\gsp{\mathfrak{sp}}
\def\gsu{\mathfrak{su}}
\def\gu{\mathfrak{u}}
\def\e{{\bar{0}}}
\def\o{{\bar{1}}}
\newcommand{\ep}{\epsilon}                             
\newcommand{\CC}{\mathbf{C}} 
\newcommand{\QQ}{\mathbf{Q}} 
\newcommand{\RR}{\mathbf{R}} 
\newcommand{\ZZ}{\mathbf{Z}} 
\newcommand{\CC}{\mathbb{C}} 
\newcommand{\QQ}{\mathbb{Q}} 
\newcommand{\RR}{\mathbb{R}} 
\newcommand{\ZZ}{\mathbb{Z}} 
\newcommand{\point}{\refstepcounter{subsection}\noindent{\bf \thesubsection.} }
\newtheorem{thm}{Theorem}[section]
\newtheorem{theorem}[thm]{Theorem}
\newtheorem{corollary}[thm]{Corollary}
\newtheorem{lemma}[thm]{Lemma}
\newtheorem{proposition}[thm]{Proposition}
\newcommand{\Deo}{\bar{\Delta}}
\newcommand{\Ro}{\bar{R}}
\theoremstyle{definition}
\newtheorem{definition}[thm]{Definition}
\newtheorem{remark}[thm]{Remark}
\newcommand{\cH}{\mathcal{H}}
\newcommand{\rk}{{\mathrm{rk\,}}}
\begin{document}
\pagestyle{plain} \title{{ \large{On Kostant root Systems for Lie superalgebras}}
}
\author{Ivan Dimitrov}
\address{Ivan Dimitrov: Department of Mathematics and Statistics, Queen's University, Kingston,
Ontario,  K7L 3N6, Canada} 
\email{dimitrov@queensu.ca} 
\thanks{Research of I.\ Dimitrov was partially supported by an NSERC grant}
\author{Rita Fioresi}
\address{Rita Fioresi: Departamento di Matematica, Universit\'a di Bologna, Piazza di Porta S. Donato 5, 40126 Bologna, Italy}
\email{rita.fioresi@unibo.it}

\subjclass[2010]{Primary 17B22; Secondary 17B20, 17B25}

\begin{abstract} We study the eigenspace decomposition of a basic classical Lie superalgebra 
under the adjoint action of a toral subalgebra, thus extending results of Kostant. In recognition of Kostant's
contribution we refer to the eigenspaces appearing in the decomposition as Kostant roots. We then prove that
Kostant root systems inherit the main properties of classical root systems. Our approach is combinatorial
in nature and utilizes certain graphs naturally associated with Kostant root systems.  
In particular, we reprove Kostant's results without making use of the Killing form.

\np
Keywords: Kostant root systems, Simple roots, Parabolic subalgebras, Hermitian symmetric pairs.
\end{abstract}

\maketitle

\vspace{0.5cm}

\section{Introduction}

\np
\point {\bf Background.}
Let $\gg$ be a simple complex Lie algebra and let $\gt$ be a toral subalgebra of $\gg$, i.e., $\gt$ is an abelian subalgebra consisting of 
semisimple elements. Under the adjoint action of $\gt$, $\gg$ decomposes as

\begin{equation} \label{krs}
\gg=\gm \, \oplus \, \left(\oplus_{\nu \in R} \, \gg_\nu\right), 
\end{equation}
where 
\[\gm  = \gc_\gg(\gt) = \{x \in \gg \, | \, [t,x] = 0 {\text { for every }} t \in \gt\}\]
is a reductive subalgebra of $\gg$, 
\[\gg_\nu  = \{x \in \gg\, | \, [t,x]=\nu(t)x {\text { for every }} t \in \gt\}\] 
and
\[R = \{ \nu \in \gt^*\backslash\{0\} \, | \, \gg_\nu \neq 0\}.\] 
The elements of $R$ are called {\it Kostant roots} (or {\it $\gt$-roots} for short) of $\gg$.

\np
Let $\gh$ be a Cartan subalgebra of $\gg$ containing $\gt$ with corresponding root decomposition of $\gg$

\begin{equation} \label{eq12}
\gg = \gh \oplus (\oplus_{\alpha \in \Delta} \, \gg_\alpha),
\end{equation} 
where $\Delta$ and $\gg_\alpha$ are defined analogously to $R$ and $\gg_\nu$ above.  

\np
Roots systems and the corresponding decompositions \eqref{eq12} provide the cornerstone for developing the classification
and structure theory of simple complex Lie algebras.  The decompositions \eqref{krs} and \eqref{eq12} are closely related as
follows. The inclusion $\gt \subset \gh$ gives rise to a natural projection $\pi: \gh^* \longrightarrow \gt^*$ that relates \eqref{krs} to \eqref{eq12} as follows: 
\[\gm = \gh \oplus (\oplus_{\alpha \in \ker \pi} \, \gg_\alpha), \quad \quad R = \pi(\Delta) \backslash \{0\}, \quad \quad
\gg_\nu = \oplus_{\alpha \in \pi^{-1} (\nu)} \, \gg_\alpha.\]
It is thus natural to expect that \eqref{krs} and $R$ would inherit (at least some) properties from \eqref{eq12} and $\Delta$.

\np
In \cite{Ko} Kostant studied the decomposition  \eqref{krs} and the set $R$ 
in the case when $\gt = \gz(\gm)$. (See Remark \ref{rem-at}
for a discussion of the
condition $\gt = \gz(\gm)$.) In particular he proved that 
each $\gg_\nu$ is an irreducible $\gm$-module, that $[\gg_\mu, \gg_\nu] = \gg_{\mu+\nu}$ whenever $\mu, \nu, \mu+ \nu \in R$, 
that $R$ admits bases of simple roots, etc. 
In recognition of Kostant's work, we call $R$ the \textit{Kostant root system} (or the \textit{$\gt$-root system} for short) associated with the 
pair $(\gg, \gt)$  and the elements of $R$ -- \textit{Kostant roots} (or \textit{$\gt$-roots}). 

\np
In \cite{G} Greenstein studied Kostant roots systems for affine Lie algebras. He showed that $\gg_\nu$ is not necessarily an irreducible
$\gm$-module and proved that a sufficient condition of its irreducibility is $\dim \gg_\alpha = 1$ for every $\alpha \in \pi^{-1} (\nu)$.

\np
\point 
{\bf Reductive Lie superalgebras.}
The goal of this paper is to extend Kostant's results to Lie superalgebras. The simple finite dimensional Lie superalgebras fall into three
subclasses: basic classical, strange, and Cartan-type superalgebras. In terms of roots and root decompositions, the basic classical 
superalgebras are the closest analogs of simple finite dimensional Lie algebras. The basic classical superalgebras are the following: 
\[ \gsl(m|n), m\neq n, \quad \gpsl(m|m), \quad \gosp(m|2n), \quad \D(2,1;a), \quad \G(3), \quad \F(4).\]
Among the basic classical superalgebras, $\gpsl(m|m)$ are the only ones that are not  Kac-Moody superalgebras in the sense of Serganova, \cite{Se}.
A manifestation of this feature is the fact that the $\gpsl(2|2)$ has root spaces that are two dimensional. 
Moreover, the projection $\gsl(2|2) \longrightarrow \gpsl(2|2)$ identifies the roots and the corresponding root spaces of $\gsl(2|2)$ and $\gpsl(2|2)$
and, hence, $\gsl(2|2)$ also admits two dimensional root spaces. This problem is resolved only by enlarging $\gsl(2|2)$ to $\ggl(2|2)$.
Even though the root spaces of $\gsl(m|m)$ and $\gpsl(m|m)$ are one dimensional for $m \geq 3$, the $\gt$-root 
spaces $\gg_\nu$ are not necessarily irreducible $\gm$-modules when $\gg = \gpsl(m|m)$ even if $m > 2$, see Remark \ref{rem_psl}.
Thus, in our work, we study $\ggl(m|m)$ instead of $\gpsl(m|m)$.

\np
Below we identify the class of Lie superalgebras for which we develop the theory of Kostant roots.

\begin{definition} \label{reductive}
A finite dimensional Lie superalgebra $\gg$ is called a {\it reductive superalgebra} if 

\begin{equation} \label{eq1.11}
\gg = \ga \oplus \gg_1 \oplus \ldots \oplus \gg_l,
\end{equation}
where $\ga$ is an abelian Lie algebra and each $\gg_j$ for $1 \leq j \leq l$ is a simple Lie algebra or a 
superalgebra isomorphic to one of the following:
\[ \gsl(m|n), m\neq n, \quad \ggl(m|m), m \geq 1, \quad \gosp(m|2n), \quad \D(2,1;a), \quad \G(3), \quad \F(4).\] 
If  $\gg=\gg_{\e}\oplus \gg_{\o}$ is a reductive superalgebra, $\gt$ is a {\it toral subalgebra} of $\gg$ if 
$\gt \subset \gg_{\e}$ is an abelian algebra which acts semisimply on $\gg$. 
\end{definition} 

\np
Note that, unlike the roots systems of reductive Lie algebras, the roots
systems of reductive superalgebras admit bases which are not conjugate under the action of the Weyl group; this is a feature they share with
the Kostant root systems of simple Lie algebras.
 
\np
\point
{\bf Main results.}
Theorems \ref{main1} -- \ref{main3} below are the main results of this paper concerning the decomposition \eqref{krs} of a reductive Lie superalgebra
$\gg$ with respect to a toral subalgebra $\gt$ of $\gg$.
First we prove that the centralizer $\gm$ of $\gt$ itself 
is a reductive Lie superalgebra. 

\begin{theorem} \label{main1}
Let $\gg$ be a reductive Lie superalgebra, let $\gt$ be a toral subalgebra of $\gg$, and let
$\gg=\gm \, \oplus \, \left( \oplus_{\nu \in R} \, \gg_\nu\right)$ be the decomposition \eqref{krs}. Then 
$\gm$ is a reductive Lie superalgebra.
\end{theorem}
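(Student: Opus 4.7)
My plan is to reduce the theorem to the case that $\gg$ is indecomposable (a simple Lie algebra, one of the basic classical superalgebras from the list, or $\ggl(m|m)$) and then to verify each remaining case directly. For the reduction, using Definition 1.3 write $\gg = \ga \oplus \gg_1 \oplus \cdots \oplus \gg_l$. For $t \in \gt$, decompose $t = t_0 + t_1 + \cdots + t_l$ along this direct sum. Because the summands pairwise commute, $\ad t$ restricts to $\ad t_i$ on $\gg_i$ (and acts as $0$ on the other summands); hence $\ad t_i$ is semisimple on $\gg_i$, so the image $\gt_i$ of $\gt$ under projection to $\gg_i$ is a toral subalgebra of $\gg_i$. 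A short verification gives
$$\gc_\gg(\gt) = \ga \oplus \gc_{\gg_1}(\gt_1) \oplus \cdots \oplus \gc_{\gg_l}(\gt_l),$$
so it suffices to prove the theorem when $\gg$ is indecomposable.

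For the classical indecomposable cases --- $\ggl(m|m)$, $\gsl(m|n)$ with $m \neq n$, and $\gosp(m|2n)$ --- I would use the natural module $V$. Since $\gt$ acts semisimply on $V$, one has a weight decomposition $V = \bigoplus_\lambda V_\lambda$ with graded dimensions $(p_\lambda | q_\lambda)$, and the centralizer of $\gt$ in $\ggl(V)$ is $\bigoplus_\lambda \ggl(p_\lambda|q_\lambda)$, which is reductive in the sense of Definition 1.3. For $\gg = \ggl(m|m)$ this is already the conclusion. For $\gsl(m|n)$ with $m \neq n$, the centralizer in $\gg$ is the intersection of the above with the supertrace-zero hyperplane, which removes at most one dimension from the abelian centre and hence leaves a reductive algebra. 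For $\gosp(m|2n)$ the invariant form pairs $V_\lambda$ with $V_{-\lambda}$; choosing half the nonzero weights $\lambda > 0$ and noting that the odd part of $V_0$ has even dimension (so that $\gosp(V_0)$ is defined and belongs to the list), one obtains
$$\gc_\gg(\gt) = \gosp(V_0) \oplus \bigoplus_{\lambda > 0} \ggl(V_\lambda).$$

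The exceptional cases $\D(2,1;a), \G(3), \F(4)$ are handled by extending $\gt$ to a Cartan subalgebra $\gh \supset \gt$ of $\gg$, writing $\gm = \gh \oplus \bigoplus_{\alpha \in \ker \pi} \gg_\alpha$ for the usual projection $\pi \colon \gh^* \to \gt^*$, and inspecting the finitely many possibilities for $\ker \pi$; because these algebras have low rank (at most $4$), this inspection is mechanical. The main obstacle I anticipate is bookkeeping in the classical step: one must track the boundary cases $p_\lambda = 0$, $q_\lambda = 0$, and $p_\lambda = q_\lambda$ to ensure each summand fits the list in Definition 1.3 (in particular, that equal super-dimensions produce the allowed $\ggl(p|p)$ rather than the unlisted $\gsl(p|p)$ or $\gpsl(p|p)$), and similarly that the supertrace condition in the $\gsl(m|n)$ case removes a central direction rather than cutting into a simple factor.
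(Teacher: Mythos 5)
Your route is genuinely different from the paper's: you identify the centralizer inside the natural module, $\gc_{\ggl(V)}(\gt)=\bigoplus_\lambda \ggl(V_\lambda)$, and then intersect with the supertrace hyperplane (resp.\ with $\gosp(V)$), whereas the paper works entirely with roots, using Proposition 2.10 of \cite{DFG} to produce a $\ker\pi$-adapted base and reading off the components of $\gm$ from the connected components of its Dynkin diagram. Your reduction to indecomposable $\gg$, the $\ggl(m|m)$ and $\gosp(m|2n)$ cases, and the low-rank inspection for the exceptionals are all sound as described, and the module-theoretic computation is arguably more concrete than the paper's argument where it applies.

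The gap is in the $\gsl(m|n)$ case, at exactly the point the paper flags as the only delicate step. If some weight space $V_\lambda$ has graded dimension $(p|p)$ with $p\geq 1$, the supertrace does \emph{not} vanish on the factor $\ggl(V_\lambda)\cong\ggl(p|p)$ of $C:=\bigoplus_\mu\ggl(V_\mu)$ (e.g.\ $\operatorname{str}E_{11}=1$), so the hyperplane $\gm=C\cap\gsl(m|n)$ contains $\gsl(p|p)$ but not $\ggl(p|p)$: it cuts through this non-simple factor, not merely through the abelian centre. Since $\gsl(p|p)$ is not on the list of allowed components, your assertion that the supertrace condition ``removes at most one dimension from the abelian centre and hence leaves a reductive algebra'' is false as written, and the check you anticipate (``removes a central direction rather than cutting into a simple factor'') would come back negative. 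The repair, which must be supplied: because $m\neq n$ there is some $\mu$ with $p_\mu\neq q_\mu$, so $I_{V_\mu}\in\gz(C)$ has $\operatorname{str}I_{V_\mu}=p_\mu-q_\mu\neq 0$; pick $x\in\ggl(V_\lambda)$ with $\operatorname{str}x\neq 0$ and $c$ with $\operatorname{str}(x-cI_{V_\mu})=0$. Since $I_{V_\mu}$ is central in $C$, the element $x-cI_{V_\mu}$ has the same adjoint action as $x$, so $\gsl(V_\lambda)\oplus\CC(x-cI_{V_\mu})\subset\gm$ is a copy of $\ggl(p|p)$; doing this for every such $\lambda$ and taking $\ga$ to be the centralizer in $\gm$ of the sum of the resulting components yields the required decomposition. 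This is precisely the analogue of the paper's step $\gg_k:=\gg_k'+(\oplus_i\CC h_i)$. The same completion issue ($\gsl(1|1)$ versus $\ggl(1|1)$) arises in your exceptional-case inspection and should be addressed there as well.
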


\np
The remaining two theorems establish properties of the set $R$ of $\gt$-roots. For these two theorems we work under the assumption that $\gt = \gz(\gg)$.

\begin{theorem} \label{main2} 
In the notation of Theorem \ref{main1} above, assume that $\gt = \gz(\gm)$.
If $\Sigma$ is a base of $\Delta$, then $\pi(\Sigma) \backslash \{0\}$  is a base of $R$ if and only if $\Sigma$ contains a base of $\Delta_\gm$, 
the root system of $\gm$. Moreover, every base $S$ of $R$ is of the form $\pi(\Sigma) \backslash \{0\}$
for some base $\Sigma$ of $\Delta$ containing a base of $\Delta_\gm$. In particular, the Kostant root system $R$ admits a base. 
\end{theorem}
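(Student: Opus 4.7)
The plan is to prove the equivalence first and then bootstrap it to the \emph{moreover} statement by constructing an appropriate $\Sigma$ from any base $S$ of $R$. The structural input provided by Theorem~\ref{main1} is that $\gm$ is reductive, so the hypothesis $\gt = \gz(\gm)$ yields $\gh = \gt \oplus (\gh\cap[\gm,\gm])$ and dually $\gh^* = \gt^* \oplus \ker\pi$, with $\Delta_\gm = \Delta\cap\ker\pi$ and $\Span(\Delta_\gm) = \Span(\Delta)\cap\ker\pi$.

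For the forward implication, I assume $\Sigma$ contains a base $\Sigma_\gm$ of $\Delta_\gm$. Linear independence of $\Sigma$ combined with $\Span(\Sigma_\gm) = \Span(\Delta)\cap\ker\pi$ forces $\pi$ to be injective on $\Sigma\setminus\Sigma_\gm$: a coincidence $\pi(\beta)=\pi(\beta')$ for distinct $\beta,\beta'\in\Sigma\setminus\Sigma_\gm$ would place $\beta-\beta'$ in $\Span(\Sigma_\gm)$, contradicting independence of $\Sigma$. A rank-nullity count then identifies $\pi(\Sigma)\setminus\{0\}=\pi(\Sigma\setminus\Sigma_\gm)$ as a basis of $\Span(R)$. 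The sign condition follows by expanding any preimage $\alpha\in\Delta$ of $\nu\in R$ as $\alpha=\sum_{\beta\in\Sigma}c_\beta\beta$ with uniformly signed integer coefficients and applying $\pi$.

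For the reverse implication, I set $\Sigma_\gm:=\Sigma\cap\Delta_\gm$ and show it is a base of $\Delta_\gm$. Given $\gamma\in\Delta_\gm$, I expand $\gamma=\sum_{\beta\in\Sigma}c_\beta\beta$ with uniform sign and apply $\pi$ to obtain a uniformly signed relation on the linearly independent set $\pi(\Sigma)\setminus\{0\}$; grouping terms by common image in $\gt^*$ forces $c_\beta=0$ for $\beta\notin\Sigma_\gm$. Hence $\gamma$ is a uniformly signed integer combination of $\Sigma_\gm$, and combined with linear independence inherited from $\Sigma$ this shows $\Sigma_\gm$ is a base of $\Delta_\gm$.

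For the \emph{moreover} statement, I construct $\Sigma$ from $S$ using two regular real linear functionals: $\ell_0$ on $\gt^*$ positive exactly on the positive system $R^+(S)$ determined by $S$, and $\ell_\gm$ on $\Span(\Delta_\gm)$ defining some chosen base $\Sigma_\gm$ of $\Delta_\gm$. For sufficiently small $\epsilon>0$, $\ell:=\ell_0\circ\pi+\epsilon\ell_\gm$ is regular on $\Delta$ and determines a base $\Sigma$ of $\Delta$. An indecomposability argument — any decomposition $\gamma=\alpha+\alpha'$ inside $\Delta^+$ with $\gamma\in\Sigma_\gm$ must, after applying $\pi$, degenerate to a decomposition inside $\Delta_\gm^+$ — shows $\Sigma_\gm\subseteq\Sigma$. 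The equivalence then gives that $\pi(\Sigma)\setminus\{0\}$ is a base of $R$, and by construction of $\ell$ it defines the positive system $R^+(S)$, so it must equal $S$. The main obstacle is the care needed for superalgebra root systems, where the Weyl group is not transitive on bases and one must rely on the standard (but nontrivial) correspondence between bases of basic classical superalgebra root systems and regular functionals on the real span of the roots; once the decomposition $\gh^*=\gt^*\oplus\ker\pi$ is in place, the rest is pure linear algebra.
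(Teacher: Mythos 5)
Your overall architecture matches the paper's: the equivalence is pure linear algebra once one knows $\ker \pi = \langle \Delta_\gm \rangle$, and the ``moreover'' part is obtained by building a base of $\Delta$ adapted to $S$ and to a base of $\Delta_\gm$ simultaneously. Your construction of that base via the perturbed functional $\ell = \ell_0\circ\pi + \epsilon\,\ell_\gm$ is essentially equivalent to the paper's route through the parabolic subset $P = \{\alpha \in \Delta \mid \pi(\alpha) \in \ZZ_{\geq 0}S\}$ and Proposition 2.10 of \cite{DFG}; both rest on the same nontrivial input (regular functionals/parabolic subsets determine bases for superalgebra root systems), which you correctly flag as the point that cannot be waved away. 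The final cone/indecomposability argument identifying $\pi(\Sigma)\setminus\{0\}$ with $S$ is also the paper's.

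There is, however, one genuinely false step in your setup. The decomposition $\gh = \gt \oplus (\gh \cap [\gm,\gm])$ does \emph{not} hold for reductive Lie superalgebras in the sense of this paper: when $\gm$ has a component isomorphic to $\ggl(k|k)$ (which does occur under the hypothesis $\gt = \gz(\gm)$ --- this is exactly why the proof of Theorem \ref{main1} takes pains to complete $\gsl(k|k)$-components to $\ggl(k|k)$), the central element of that component lies in $[\gm,\gm]$, since the identity matrix has supertrace zero. Already for $\ggl(1|1)$ one has $\gz = \CC(E_{11}+E_{22}) = \CC\,[E_{12},E_{21}] \subset [\gm,\gm]$, so $\gt \cap (\gh\cap[\gm,\gm]) \neq 0$ and the sum is not direct. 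The conclusion you actually use downstream, namely $\langle \Delta_\gm\rangle = \ker\pi$ (equivalently $\Span(\Delta_\gm) = \Span(\Delta)\cap\ker\pi$), is nevertheless true, but it cannot be read off from your direct sum; it requires the dimension identity $\dim\gh = \rk\gm + \dim\gz(\gm)$ of the paper's Lemma \ref{lemma200}, which holds for $\ggl(k|k)$ only because the defect in the rank exactly compensates the dimension of the center. Replacing your decomposition by that dimension count repairs the argument, and everything after that point goes through.
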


\begin{theorem} \label{main3} 
Under the assumptions of Theorem \ref{main2} above we have
\begin{enumerate}
\item[(i)] for any $\nu \in R$, the $\gm$-module $\gg_\nu$ is irreducible;
\item[(ii)] if $\mu$, $\nu \in R$ and $\mu+\nu \in R$, then 
$[\gg_\mu, \gg_\nu]=\gg_{\mu+\nu}$;
\item[(iii)] if $\mu$, $\nu \in R$, and $\mu + k \nu \in R$, where $k \in \ZZ_{>0}$, then $\mu + j \nu \in R$ for every $0 \leq j \leq k$. 
\end{enumerate}
\end{theorem}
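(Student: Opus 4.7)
The plan is to fix, via Theorem \ref{main2}, a base $\Sigma$ of $\Delta$ containing a base $\Sigma_\gm$ of $\Delta_\gm$, together with the Borel subalgebra $\gb \subset \gg$ it determines; then $\gb_\gm := \gb \cap \gm$ is a Borel subalgebra of $\gm$ and both contain the Cartan $\gh \supset \gt$. Each Kostant root space $\gg_\nu = \bigoplus_{\alpha \in \pi^{-1}(\nu)} \gg_\alpha$ is $\gb_\gm$-stable, with $\gh$-weight support $\pi^{-1}(\nu) \cap \Delta$. For (i) I would introduce the combinatorial graph $\Gamma_\nu$ whose vertices are $\pi^{-1}(\nu) \cap \Delta$ and whose edges connect $\alpha$ to $\alpha + \beta$ whenever $\beta \in \Sigma_\gm \cup (-\Sigma_\gm)$ and $\alpha + \beta \in \Delta$, and establish two facts: (a) $\Gamma_\nu$ is connected, so that $\gg_\nu$ is generated as an $\gm$-module by any single $\gg_\alpha$; and (b) the directed subgraph with edges indexed by $\Sigma_\gm$ alone has a unique sink, so that $\gg_\nu$ possesses a unique $\gb_\gm$-highest weight line. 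Together (a) and (b) force $\gg_\nu$ to be an irreducible highest weight $\gm$-module.

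Part (ii) follows from (i) with a little extra work. The inclusion $[\gg_\mu, \gg_\nu] \subseteq \gg_{\mu+\nu}$ is immediate from the Jacobi identity, and $[\gg_\mu, \gg_\nu]$ is an $\gm$-submodule; by (i) the bracket is either zero or all of $\gg_{\mu+\nu}$, so it suffices to produce a single nonzero bracket. Starting from any $\gamma \in \pi^{-1}(\mu + \nu) \cap \Delta$, I would traverse $\Gamma_{\mu+\nu}$ to reach a vertex that admits a decomposition $\gamma' = \alpha + \beta$ with $\alpha \in \pi^{-1}(\mu) \cap \Delta$, $\beta \in \pi^{-1}(\nu) \cap \Delta$, and $[\gg_\alpha, \gg_\beta] \neq 0$, which is guaranteed by the root-system structure of $\Delta$ whenever $\alpha + \beta \in \Delta$ avoids a vanishing bracket of isotropic odd roots. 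For (iii) I would reduce the $\gt$-string property to the classical root-string property in $\Delta$: given $\alpha \in \pi^{-1}(\mu) \cap \Delta$, $\gamma \in \pi^{-1}(\mu + k\nu) \cap \Delta$, and a choice $\delta \in \pi^{-1}(\nu) \cap \Delta$, use that $\delta$-strings in basic classical root systems are intervals of bounded length, iteratively adjusting $\delta$ within $\pi^{-1}(\nu)$ by elements of $\Delta_\gm$ via $\Gamma_\nu$ to realize each intermediate $\mu + j\nu$ as an element of $R$.

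The main obstacle is part (i), and within it the two combinatorial claims (a) and (b) about $\Gamma_\nu$. These do not follow formally from $\gt = \gz(\gm)$ but require a concrete analysis of how $\pi^{-1}(\nu)$ intersects $\Delta$ in each basic classical type, including $\ggl(m|m)$. Particular care is needed for isotropic odd roots, where $\alpha + \beta$ may fail to lie in $\Delta$ even when $\alpha$, $\beta$ are suitably placed, and where brackets of root vectors may vanish despite $\alpha + \beta \in \Delta$; both phenomena can affect the connectivity of $\Gamma_\nu$ and the identification of its sink. Once (i) is established, parts (ii) and (iii) follow from the framework above together with the combinatorial bookkeeping sketched.
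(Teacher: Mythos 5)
Your framework is the same as the paper's (graphs on the fibers $\pi^{-1}(\nu)$, irreducibility from connectivity, then surjectivity of the bracket from irreducibility plus a single nonzero bracket), but the proposal stops exactly where the real work begins, and you say so yourself: claims (a) and (b) about $\Gamma_\nu$ are asserted to need ``a concrete analysis in each basic classical type'' without any indication of how that analysis would go. The paper's resolution of (a) is the substantive content of part (i) and rests on two ideas absent from your sketch: first, for $\gm \subset \gg_\e$ irreducibility is obtained not from graph connectivity but from Stembridge's result that each coset of the weight lattice modulo the root lattice contains a weight occurring in \emph{every} simple module supported on that coset, which combined with one-dimensionality of the weight spaces of $\gg_\nu$ forbids two simple summands; second, for $\gsl(m|n)$, $\ggl(m|m)$, and $\gosp(m|2n)$ the whole root system $\Deo$ is identified with a Kostant root system $\tD/\langle I \rangle$ of an ordinary simple Lie algebra ($\gsl_{m+n}$, $\gso_{2N}$, $\gso_{2N+1}$), and connectivity of the relevant graph for $\gg$ is deduced from the already-settled Lie algebra case via the functoriality of the graphs under the projections $\pi_I$. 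Without some such transfer principle (or the exhaustive case check you defer), part (i) is not proved. Your claim (b) about a unique sink is, incidentally, unnecessary: connectivity alone gives irreducibility, since every submodule of $\gg_\nu$ is a weight submodule and the weight spaces are one-dimensional.

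For part (ii) the genuinely hard point is the one you wave at: producing $\alpha \in \pi^{-1}(\mu)$, $\beta \in \pi^{-1}(\nu)$ with $\alpha + \beta \in \Delta$. When $\mu$ (or $\nu$) is primitive this follows from choosing a base of $R$ through $\mu$ and lifting an edge of the quotient graph to an edge of $\Gamma^\tau_{\Delta,\Sigma,J}$ (the paper's Lemma \ref{graph-lemma}(iii), whose proof itself needs part (i) for all subgraphs). But when neither $\mu$ nor $\nu$ is primitive, ``traversing $\Gamma_{\mu+\nu}$ until a decomposable vertex appears'' is precisely what must be justified; the paper handles this by an induction on $|\Delta|$ through the subsystems $\Delta[\theta,d]$ of roots whose $\theta$-coefficient is divisible by $d$, with a separate analysis of $\E_8$ where simple-root coefficients exceed $4$. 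Nothing in your sketch addresses this case. (Your worry that $[\gg_\alpha,\gg_\beta]$ might vanish for isotropic odd roots with $\alpha+\beta \in \Delta$ is unfounded for reductive superalgebras in the paper's sense: Proposition \ref{prop2.4}(ii), quoting Kac, gives $[\gg_\alpha,\gg_\beta] \neq 0$ iff $\alpha+\beta \in \Deo$.) Similarly for (iii), reducing the $R$-string property to $\Delta$-root strings does not work as stated; the paper instead splits into the case $\mu \not\in \QQ\nu$, settled by connectivity of $\Gamma^\tau_{\Delta,\Sigma,J}$ from part (i), and the proportional case, settled by the chain of positive roots in Proposition \ref{prop2.4}(iii). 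As it stands the proposal is a plausible outline whose three key steps are each left unestablished.
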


\np
\point
{\bf Approach.}
Theorems \ref{main1} -- \ref{main3} above are the analogs of results in \cite{Ko}. 

\np
Theorem \ref{main1} is relatively straightforward but
one needs to take special care to ensure that in $\gm$ every component of $[\gm,\gm]$ isomorphic to $\gs\gl(m|m)$ can be completed to
a component isomorphic to $\gg\gl(m|m)$.  

\np
Theorem \ref{main2} relies on the results of \cite{DFG} to characterize the bases of $R$.

\np
Theorem \ref{main3} is the most difficult of the three. 
Kostant's approach to studying the properties of the set $R$ and the decomposition \eqref{krs} parallels the classical approach to root systems and root
decompositions and makes a heavy use of a form on $\gt$ inherited from the restriction on $\gh$ of the Killing form of $\gg$. 
This approach does not extent to the case when $\gg$ is a superalgebra because a non-degenerate invariant form on $\gg$ is, in general,
not positive-definite. In order to overcome this problem we introduce a graph $\Gamma_{\Delta, \Sigma}$ associated with a root system $\Delta$ and a base 
$\Sigma$ of $\Delta$
and translate properties (i) -- (iii) into properties of $\Gamma_{\Delta, \Sigma}$. We then prove that the graphs $\Gamma_{\Delta, \Sigma}$ behave ``functorially'' with respect 
to projections like $\pi$. In the case when $\gg$ is a non-exceptional basic classical Lie superalgebra, part (i) of Theorem \ref{main3} then follows from the observation
that the corresponding graphs $\Gamma_{\Delta, \Sigma}$ are the same as the graphs associated with appropriate Kostant root systems of Lie algebras. 
In the case when $\gg$ is an exceptional Lie superalgebra, part (i) of Theorem \ref{main3} is a direct calculation. In order to provide a proof of Theorem \ref{main3}
independent of \cite{Ko} we also prove part (i) in the case when $\gm \subset \gg_\e$ using a result of Stembridge, \cite{St}.
Once we prove part (i) of Theorem \ref{main3}, the remaining statements are established using the functorial properties of the graphs $\Gamma_{\Delta, \Sigma}$. 

\np
\point 
{\bf Structure and contents.}
The paper is organized as follows. In Section \ref{reductive} we discuss the general properties of reductive Lie superalgebras and prove  
Theorem \ref{main1}. In Section \ref{bases} we prove Theorem \ref{main2}.
In Section \ref{graphs} we introduce and study the graphs $\Gamma_{\Delta, \Sigma}$.
In Section \ref{irr-sec} we prove Theorem \ref{main3}.
Section \ref{sec_hermitian} applies Theorem \ref{main2} to studying Hermitian symmetric pairs of Lie superalgebras.
Explicit calculations that complete the proof of part (i) of Theorem \ref{main3} for $\gg = \D(2,1;a), \G(3)$, and $\F(4)$ are provided in the Appendix.

\np
\point
{\bf Notation and conventions.}
\begin{itemize}
\item[-] The base field is $\CC$ and, unless explicitly stated otherwise, all vector spaces, Lie algebras and superalgebras, etc. are defined over $\CC$.
\item[-] The sets of positive (non-negative, etc.) integers are denoted by $\ZZ_{>0}$ ($\ZZ_{\geq 0}$, etc.). 
\item[-] For a subset $X$ of a vector space $V$, the span of $X$ is denoted by $\langle X \rangle$. 
\item[-] The elements of $\ZZ/2\ZZ$ are denoted  by $\e$ and $\o$; if $V$ is vector superspace (i.e, a 
$\ZZ/2\ZZ$-graded vector space), its graded components are denoted by $V_\e$ and $V_\o$.
\item[-] If $\Gamma$ is a graph, $v(\Gamma)$ and $e(\Gamma)$ denote the vertices and the edges of $\Gamma$.
\item[-] If $\gl$ is a Lie superalgebra, $\gz(\gl)$ denotes the center of $\gl$; if $\gk$ is a subalgebra of $\gl$, $\gc(\gk)$ denotes the
centralizer of $\gk$ in $\gl$.
\end{itemize}

\section{Reductive superalgebras} \label{reductive}

\np
Throughout the paper $\gg$ denotes a reductive Lie superalgebra  with a decomposition \eqref{eq1.11}. Note that, unlike the case when $\gg$ is a reductive Lie algebra,
it is not true that $\ga$ is the center of $\gg$. Instead, we have the following relationship.

\begin{lemma} \label{lemma200} Let $\gg$ be a reductive Lie superalgebra and  let $\gh$ be a Cartan subalgebra of $\gg$. Then
\[
\dim \gh = \rk \gg + \dim \gz(\gg),
\]
where $\rk \gg$ denotes the dimension of $\langle \Delta \rangle \subset \gh^*$, the vector space spanned by the roots of $\gg$.
\end{lemma}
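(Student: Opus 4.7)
The plan is to reduce the identity to a componentwise calculation along the decomposition \eqref{eq1.11}. First I would observe that $\ga$ is abelian and commutes with each $\gg_j$, so $\ga \subset \gz(\gg)$; moreover, any Cartan subalgebra $\gh$ of $\gg$ splits compatibly with \eqref{eq1.11} as
\[
\gh = \ga \oplus \gh_1 \oplus \cdots \oplus \gh_l,
\]
with $\gh_j := \gh \cap \gg_j$ a Cartan subalgebra of $\gg_j$. Correspondingly, the root system $\Delta$ decomposes as a disjoint union $\Delta_1 \sqcup \cdots \sqcup \Delta_l$, where each $\Delta_j$ is identified with the root system of $\gg_j$, viewed inside $\gh^*$ by extending roots by $0$ on $\ga$ and on $\gh_k$ for $k \neq j$. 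It follows that $\langle \Delta \rangle = \langle \Delta_1 \rangle \oplus \cdots \oplus \langle \Delta_l \rangle$ and hence $\rk \gg = \sum_j \rk \gg_j$.

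Second, since distinct summands in \eqref{eq1.11} commute, $\gz(\gg) = \ga \oplus \gz(\gg_1) \oplus \cdots \oplus \gz(\gg_l)$. Combining these two decompositions, the lemma reduces to the componentwise equality
\[
\dim \gh_j \;=\; \rk \gg_j + \dim \gz(\gg_j)
\]
for each $j$, since the $\ga$-summand contributes equally to $\dim \gh$ and to $\dim \gz(\gg)$ and does not contribute to $\rk \gg$.

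Third, I would verify this componentwise identity case by case using the classification. When $\gg_j$ is a simple Lie algebra, or one of the basic classical superalgebras $\gsl(m|n)$ with $m\neq n$, $\gosp(m|2n)$, $\D(2,1;a)$, $\G(3)$, or $\F(4)$, the center of $\gg_j$ is trivial and the roots span $\gh_j^*$, so both sides equal $\dim \gh_j$. The only delicate case is $\gg_j = \ggl(m|m)$, where $\dim \gh_j = 2m$ and $\gz(\ggl(m|m)) = \CC \cdot I$ is one-dimensional. Since $I$ is central, every root $\alpha$ vanishes on $I$, so $\langle \Delta_j \rangle$ is contained in the hyperplane $\{ \varphi \in \gh_j^* \mid \varphi(I) = 0 \}$ of dimension $2m-1$; on the other hand, the explicit roots $\epsilon_i - \epsilon_{i'}$, $\delta_i - \delta_{i'}$, and $\epsilon_1 - \delta_1$ provide $2m-1$ linearly independent elements, so $\rk \ggl(m|m) = 2m-1$ and $2m = (2m-1) + 1$ as required.

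No serious obstacle is expected: the argument is essentially organizational. The only subtle point is the $\ggl(m|m)$ case, which is precisely the reason Definition \ref{reductive} uses $\ggl(m|m)$ rather than $\gpsl(m|m)$ among the allowed components.
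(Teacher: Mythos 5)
Your proposal is correct and follows essentially the same route as the paper: decompose $\gh$, $\Delta$, and $\gz(\gg)$ along \eqref{eq1.11} and check the identity component by component, with $\ggl(m|m)$ as the only case where the center and the root-span defect are both nonzero. The paper's proof is just a terser version of this argument, so there is nothing to add.
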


\begin{proof} 
Notice that $\gh$ decomposes as $\gh = \ga \oplus \gh_1 \oplus \ldots \oplus \gh_l$, where $\gh_j$ is a Cartan subalgebra of $\gg_l$. 
Consider $\gg_j$. If $\gg_j \cong \ggl(m|m)$ for some $m$, then $ \dim \gz(\gg) =1$ and $\rk \gg_j = \dim \gh_j -1$;
if, on the other hand, $\gg_j$ is not isomorphic to $\ggl(m|m)$, then $\gz(\gg) = 0$ and $\rk \gg_j = \dim \gh_j$.
Taking into account the above, the decomposition 
\[
\gz(\gg)  = \ga \oplus \gz(\gg_1) \oplus \ldots \gz(\gg_l)
\]
completes the proof.
\end{proof}

\np
Let $\gt$ be a toral subalgebra of $\gg$. 
Fixing a Cartan subalgebra $\gh$ with $\gt\subset \gh$, we denote by $\pi$ the natural projection $\pi: \gh^* \longrightarrow \gt^*$. 
It is convenient to introduce the sets
$\Deo:=\Delta \cup \{0\}$ and $\Ro:= R \cup \{0\}$, where  $\Delta$ and $R$ are respectively the $\gh$-roots and the $\gt$-roots of $\gg$.
Note that $\pi(\Delta) = \Ro$.

\np Our first goal is to prove Theorem \ref{main1}. We start by establishing a result which is of independent interest.
Recall that a subset $\Sigma \subset \Delta$ is a 
\textit{base} of  $\Delta$ if $\Sigma$ is linearly independent and any element of $\Delta$ is
 an integral  linear combination of elements in $\Sigma$ with all coefficients in $\ZZ_{\geq 0}$ or  $\ZZ_{\leq 0}$. 

\begin{lemma} \label{lemma2.2} Let $\gg$ be a reductive Lie superalgebra with roots $\Delta$.  
If $W$ is a subspace of $\gh^*$, then there exists a base $\Sigma$ of $\Delta$ such that every element of $\Delta \cap W$ is a
combination of elements of the set $\Sigma_W := \Sigma \cap W$. 
\end{lemma}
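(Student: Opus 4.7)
The plan is to build a suitable base by producing a positive system of $\Delta$ via a lexicographic ordering that privileges the subspace $W$. First I would fix a vector-space decomposition $\gh^* = W \oplus W'$ with associated projection $\pi_{W'}: \gh^* \to W'$, and work inside the real span of $\Delta$, where the classical construction of linear orderings on roots applies.

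Next I would pick two linear functionals: a functional $h$ on $W'$ satisfying $h(\pi_{W'}(\alpha)) \neq 0$ for every $\alpha \in \Delta \setminus W$, which is possible because $\pi_{W'}(\Delta \setminus W)$ is a finite subset of $W' \setminus \{0\}$; and a functional $g$ on $W$ satisfying $g(\alpha) \neq 0$ for every $\alpha \in \Delta \cap W$. Declaring $\alpha \in \Delta$ positive when $h(\pi_{W'}(\alpha)) > 0$, or when $h(\pi_{W'}(\alpha)) = 0$ and $g(\alpha) > 0$, defines a set $\Delta^+$ with $\Delta = \Delta^+ \sqcup (-\Delta^+)$ that is closed under addition whenever the sum lies in $\Delta$. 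Applied component by component to the summands $\ga \oplus \gg_1 \oplus \ldots \oplus \gg_l$, the known correspondence between positive systems and bases for the simple factors appearing in the definition of a reductive Lie superalgebra produces a base $\Sigma$ of $\Delta$ whose non-negative integer combinations recover precisely $\Delta^+$.

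To finish, I would verify that every $\alpha \in \Delta \cap W$ lies in $\langle \Sigma_W \rangle$. If $\alpha \in \Delta^+ \cap W$ is written $\alpha = \sum_{\sigma \in \Sigma} c_\sigma \sigma$ with $c_\sigma \in \ZZ_{\geq 0}$, then applying $h \circ \pi_{W'}$ yields
\[
0 \;=\; h(\pi_{W'}(\alpha)) \;=\; \sum_{\sigma \in \Sigma} c_\sigma\, h(\pi_{W'}(\sigma)).
\]
Since each simple root $\sigma$ is positive, $h(\pi_{W'}(\sigma)) \geq 0$, and so $c_\sigma > 0$ forces $h(\pi_{W'}(\sigma)) = 0$; by genericity of $h$ this means $\pi_{W'}(\sigma) = 0$, i.e., $\sigma \in \Sigma_W$. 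The case $\alpha \in (-\Delta^+) \cap W$ follows by negation.

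The main obstacle I anticipate is justifying the passage from the positive system $\Delta^+$ to a base $\Sigma$ in the superalgebra setting. For simple Lie algebras this is classical, but for basic classical Lie superalgebras different Borel subalgebras are in general not Weyl-conjugate, so the correspondence must be appealed to case by case across the list $\gsl(m|n)$, $\ggl(m|m)$, $\gosp(m|2n)$, $\D(2,1;a)$, $\G(3)$, $\F(4)$; it is nevertheless well established (including in the subtle $\ggl(m|m)$ case where two-dimensional root spaces do not appear on the root-system side), and this is what allows the lexicographic positive system constructed above to yield an actual base of $\Delta$.
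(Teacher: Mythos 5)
Your proof is correct and follows essentially the same route as the paper: your composite $h \circ \pi_{W'}$ plays exactly the role of the paper's functional $f$, which vanishes precisely on the roots lying in $W$, and your concluding sign argument on the coefficients is identical to the paper's. The only (cosmetic) difference is how the base is produced: the paper takes the parabolic set $\{\alpha \in \Delta \mid f(\alpha) \geq 0\}$ and invokes Proposition 2.10 of \cite{DFG}, whereas you refine to a genuine positive system by lexicographic tie-breaking on $W$ and appeal to the positive-system/base correspondence for the basic classical Lie superalgebras, which amounts to the same external input.
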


\begin{proof} It suffices to prove the statement in the case when $\gg$ is a Kac-Moody Lie superalgebra, i.e.,  one of the superalgebras
$\gsl(m|n), m\neq n,  \ggl(m|m), m \geq 1, \gosp(m|2n)$,  $\D(2,1;a)$,  $\G(3)$, or $\F(4)$. Fix a linear function $f \in (\gh^*)^*$ which 
takes real values on $\Delta$ and such that, 
for every $\alpha \in \Delta$, we have $\alpha \in W$ if and only if $\alpha \in \ker f$. The set $P := \{\alpha \in \Delta \, | \, f (\alpha) \geq 0\}$
is a parabolic subset of $\Delta$, see \cite{DFG}. Proposition 2.10 in \cite{DFG} implies that there exists a base $\Sigma$ of $\Delta$ such that every positive
root with respect to $\Sigma$ belongs to $P$. In particular, $f$ takes only non-negative values on the elements of $\Sigma$. Now let $\alpha \in \Delta \cap W$. Then 
\[
\alpha = \sum_j c_j \beta_j + \sum_k d_k \gamma_k,
\]
where $\beta_j \in \Sigma_W$, $\gamma_k \in \Sigma \backslash \Sigma_W$ and all coefficients $c_j, d_k$ are non-negative or non-positive. Applying $f$ we obtain
\[ 0 = \sum_k d_k f(\gamma_k),\]
which shows that, unless $d_k = 0$ for every $k$, there exist $k_1$ and $k_2$ such that one of $f(\gamma_{k_1})$ and $f(\gamma_{k_2})$ 
positive and the other one is negative. This contradicts the fact that $f$ takes only non-negative values on the elements of $\Sigma$. Hence $d_k = 0$
for every $k$, i.e., $\alpha$ is a linear combination of elements of $\Sigma_W$.
\end{proof}

\np
A base $\Sigma$ with the above property is called a {\it $W$-adapted base of $\Delta$}.

\begin{proof}[\bf{Proof of Theorem \ref{main1}}]
As in the proof of Lemma \ref{lemma2.2} above it suffices to prove the statement in the case when $\gg$ is a Kac-Moody superalgebra. Since 
$\gm = \gh \oplus (\oplus_{\alpha \in \ker \pi} \gg_\alpha)$, Lemma \ref{lemma2.2} applies. Fix a $\ker \pi$-adapted base $\Sigma$ of $\Delta$ and 
set $\Sigma_\gm := \Sigma \cap \ker \pi$. Consider the Dynkin diagram associated with $\Sigma$ and let 
\[
\Sigma_\gm = \Sigma_1 \sqcup \Sigma_2 \sqcup \dots \sqcup \Sigma_l
\]
be the decomposition of $\Sigma_\gm$ into connected components. Denote the superalgebra generated by $\oplus_{\alpha \in \Sigma_k} (\gg_\alpha \oplus \gg_{-\alpha})$ 
by $\gg_k'$. Note that $\gg_k'$ is either a Kac-Moody superalgebra or is isomorphic to $\gsl(m_k|m_k)$ with $m_k\geq 1$. 

\np
We need to show that we can ``complete'' the subalgebras $\gg_k'$ to Kac-Moody superalgebras $\gg_k$, i.e.,
that there exist Kac-Moody superalgebras $\gg_k$ such that $\gg_k' \subset \gg_k$ and the sum $\gg_1 + \gg_2 + \dots + \gg_l$ is direct.
First we set $\gg_k := \gg_k'$ unless $\gg_k'$ is isomorphic to $\gsl(m_k|m_k)$. In order to deal with the subalgebras $\gg_k'$ 
 isomorphic to $\gsl(m_k|m_k)$, we consider two cases for $\gg$.

\np
If $\gg$ is isomorphic to $\gsl(m|n), m\neq n,  \ggl(m|m), m \geq 1$, or  $\gosp(m|2n)$, the roots of $\gg$ are expressed in terms of linear 
functions $\vep_i, \delta_j \in \gh^*$ with dual elements $h_i \in \gh$, see \cite{Ka}. For each $k$ we then define 
\[
\gg_k := \gg_k' + (\oplus_i \, \CC h_i), 
\]
where the sum is over all indices $i$ that appear in the expression in terms of $\vep$'s and $\delta$'s of a root in $\Sigma_k$. 
This definition agrees with the previous definition of $\gg_k$ when $\gg_k'$ is not isomorphic to $\gsl(m_k|m_k)$ and
$\gg_k \cong \ggl(m_k|m_k)$ when $\gg_k \cong \gsl(m_k|m_k)$. Moreover, the sum $\gg_1 + \gg_2 + \dots + \gg_l$ is direct.

\np
If $\gg$ is isomorphic to $\D(2,1;a)$,  $\G(3)$, or $\F(4)$, we notice, by inspecting the list of possible Dynkin diagrams,
that $\gg_k'$ is isomorphic to $\gsl(m_k|m_k)$ for at most one index $k$ and, for this $k$, $m_k = 1$, see the Appendix. 
If there is no such $k$ we are done. Assume now that $\gg_1' \cong \gsl(1|1)$. Let $h \in \gh$ be such that $[h, \gg_k] = 0$ for $k>1$
but $[h, \gg_1'] \neq 0$. Then $\gg_1 := \CC h \oplus \gg_1' \cong \ggl(1|1)$ and the sum $\gg_1 + \gg_2 + \dots + \gg_l$
is direct.

\np 
To complete the proof, we set $\ga:= \gc_\gm(\gg_1 \oplus \gg_2 \oplus \dots \oplus \gg_l)$. The decomposition
\[
\gm  = \ga \oplus \gg_1 \oplus \gg_2 \oplus \dots \oplus \gg_l
\]
shows that $\gm$ is a reductive Lie superalgebra.
\end{proof}

\np
We complete this section by recording some properties of the roots of reductive superalgebras that will be used in the rest of the paper.

\begin{proposition} \label{prop2.4} Let $\gg$ be a reductive Lie superalgebra with roots $\Delta$.  Then
\begin{enumerate}
\item[(i)] $-\Delta = \Delta$.
\item[(ii)] If $\alpha, \beta \in \Delta$, then $\alpha + \beta \in \Delta \cup \{0\}$ if and only if $[\gg_\alpha, \gg_\beta] \neq 0$.
\item[(iii)] If $\Sigma$ is a base of $\Delta$ and $\gamma$ is a positive root with respect to $\Sigma$,
then there exist positive roots $\gamma=\gamma_1,\gamma_2, \ldots, \gamma_N, \gamma_{N+1} = 0 \in \Deo$ such that
$\gamma_i - \gamma_{i+1} \in \Sigma$ for every $1 \leq i \leq N$.
\end{enumerate}
\end{proposition}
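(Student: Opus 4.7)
My plan is to treat the three parts separately, reducing each to statements about the individual components of the decomposition \eqref{eq1.11}. For part (i), $\Delta$ is the disjoint union of the root systems of the simple components $\gg_j$ (the abelian summand $\ga$ contributes no roots), and for each simple Lie algebra and each basic classical superalgebra in our list the symmetry $-\Delta_{\gg_j} = \Delta_{\gg_j}$ can be read off directly from the explicit descriptions of the roots in terms of $\epsilon_i$ and $\delta_j$ for the series $\gsl(m|n)$, $\ggl(m|m)$, $\gosp(m|2n)$, and from the lists for $\D(2,1;a)$, $\G(3)$, $\F(4)$ recorded in the Appendix.

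For part (ii), the ``if'' direction is immediate: $[\gg_\alpha, \gg_\beta] \subset \gg_{\alpha+\beta}$ by the semisimple action of $\gh$, so a non-zero bracket forces $\alpha+\beta \in \Deo$. For the ``only if'' direction, I would first reduce to a single simple component: if $\alpha \in \Delta_{\gg_i}$ and $\beta \in \Delta_{\gg_j}$ with $i \neq j$, then $[\gg_\alpha, \gg_\beta] \subset [\gg_i, \gg_j] = 0$ and also $\alpha + \beta \notin \Deo$, since roots of distinct components lie in linearly independent subspaces of $\gh^* = \ga^* \oplus \gh_1^* \oplus \ldots \oplus \gh_l^*$. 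Within a single component, root spaces are one-dimensional (using the full $2m$-dimensional diagonal Cartan for $\ggl(m|m)$), and one verifies $[\gg_\alpha, \gg_\beta] \neq 0$ when $\alpha+\beta \in \Deo$ via the explicit matrix realizations for the series $\gsl, \ggl, \gosp$ and via direct inspection of structure constants for the three exceptional superalgebras.

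For part (iii), letting $\Delta^+$ denote the positive roots with respect to $\Sigma$, I would use the standard fact that the subalgebra $\bigoplus_{\delta \in \Delta^+} \gg_\delta$ is generated as a Lie superalgebra by $\{\gg_\alpha : \alpha \in \Sigma\}$, which holds for all simple Lie algebras and all Kac-Moody superalgebras in our list. Given $\gamma \in \Delta^+$, this provides an expression of some non-zero element of $\gg_\gamma$ as an iterated commutator $[y_{i_1}, [y_{i_2}, [\ldots, [y_{i_{N-1}}, y_{i_N}] \ldots]]]$ with $y_{i_j} \in \gg_{\alpha_{i_j}}$ and $\alpha_{i_j} \in \Sigma$ whose inner partial brackets are all non-zero. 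Setting $\gamma_j := \alpha_{i_j} + \alpha_{i_{j+1}} + \ldots + \alpha_{i_N}$ for $1 \leq j \leq N$ and $\gamma_{N+1} := 0$, each $\gamma_j$ is a positive root (as it is the weight of a non-zero partial bracket) and $\gamma_j - \gamma_{j+1} = \alpha_{i_j} \in \Sigma$, yielding the desired chain.

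The main obstacle I anticipate is the ``only if'' direction of part (ii), especially for $\ggl(m|m)$ (where the standard supertrace form is degenerate on the center) and for the three exceptional superalgebras; establishing $[\gg_\alpha, \gg_\beta] \neq 0$ whenever $\alpha + \beta \in \Deo$ uniformly seems to require either a suitably modified non-degenerate invariant form or case-by-case inspection of structure constants.
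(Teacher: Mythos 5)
Your proposal is correct in substance but takes a genuinely different route from the paper. For (i) and (ii) the paper simply observes that these are rephrasings of Proposition 2.5.5 in Kac's \emph{Lie superalgebras}, whose proof uses the invariant form; you instead propose a reduction to individual components followed by case-by-case inspection of matrix realizations and structure constants. That works, but note that the obstacle you anticipate for $\ggl(m|m)$ is illusory: the supertrace form $(x,y)=\mathrm{str}(xy)$ is non-degenerate on all of $\ggl(m|m)$ (it degenerates only after restricting to $\gsl(m|m)$, which is precisely why the paper works with $\ggl(m|m)$), so the uniform form-based argument covers every algebra on the list and no case-by-case check is needed. For (iii) the paper argues dually: if $\gamma\notin\Sigma$ is positive and $\gamma-\alpha\notin\Delta$ for every $\alpha\in\Sigma$, then $\gg_\gamma$ would be a lowest weight space of a proper submodule of the relevant component of the adjoint representation, a contradiction; the chain is then built by repeatedly subtracting simple roots. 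Your argument builds the chain upward from a non-zero left-normed iterated bracket, which is fine, but it rests on the ``standard fact'' that $\oplus_{\delta\in\Delta^+}\gg_\delta$ is generated by the simple root spaces \emph{for an arbitrary base} $\Sigma$ (not just the distinguished one). For Lie superalgebras this is not automatic and is essentially equivalent to the conjunction of (ii) and (iii), so invoking it without a reference (e.g.\ to the contragredient presentation attached to any base via odd reflections) is mildly circular; the paper's lowest-weight argument yields it directly. With that citation supplied, your proof is complete.
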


\begin{proof}
Statements (i) and (ii) are just rephrasings of the analogous statements of Proposition 2.5.5 in \cite{Ka}. Statement (iii)  follows from the
fact that, if $\gamma \not \in \Sigma$ is 
a positive root, then there exists $\alpha \in \Sigma$ such that $\gamma - \alpha$ is a (positive) root. To prove this fact, assume to the contrary 
that $\gamma - \alpha \not \in \Delta$  
for any $\alpha \in \Sigma$.  This implies that $\gg_\gamma$ is a lowest 
weight space of a proper submodule of the component of the adjoint representation of $\gg$ containing $\gg_\gamma$.
This contradiction completes the proof.
\end{proof}

\section{Bases} \label{bases}
 
\np In this section we prove Theorem \ref{main2} and establish some result about bases of root systems that will be needed in 
the rest of the paper.
The definition of a base of $\Delta$ extends in a natural way to subsets $S$ of $R$.

\begin{definition} A subset  $S \subset R$ is  a
\textit{base} of  $R$  if $S$ is linearly independent and any element of $R$ is
 an integral  linear combination of elements of $S$ with all
coefficients in $\ZZ_{\geq 0}$ or  $\ZZ_{\leq 0}$. The cardinality of $S$ is called {\it rank of $R$}.
\end{definition}

\np
We are now ready to prove Theorem \ref{main2}.

\begin{proof}[\bf{Proof of Theorem \ref{main2}}] We start with the observation 
that $\ker \pi = \langle \Delta_\gm \rangle$. Indeed, for any $\alpha \in \Delta_\gm$ and every $t \in \gt$, we have
$\alpha(t) = 0$ because $\gt = \gz(\gm)$, proving $\langle \Delta_\gm \rangle \subset \ker \pi$. Moreover, 
Lemma \ref{lemma200} applied to $\gm$ implies that
\[\dim \ker \pi = \dim \gh^* - \dim \gt^* = \dim \gh - \dim \gt = \rk \gm = \dim \langle \Delta_\gm\rangle,\]
proving that $\ker \pi = \langle \Delta_\gm \rangle$.

\np
Let $\Sigma = \{\alpha_1, \ldots, \alpha_k, \beta_1, \ldots, \beta_l\}$ be a base of $\Delta$, where $\alpha_1, \ldots, \alpha_k \in \Delta_\gm$ and 
$\beta_1, \ldots, \beta_l \in \Delta \backslash \Delta_\gm$. Then $\pi(\Sigma) \backslash \{0\} = \{\pi(\beta_1), \ldots, \pi(\beta_l)\}$. 
It is clear that every element
of $R$ is an integral linear combination of elements of $\pi(\Sigma) \backslash \{0\}$ with all coefficients in $\ZZ_{\geq 0}$ or $\ZZ_{\leq 0}$.
Hence $\pi(\Sigma) \backslash \{0\}$ is a base of $R$ if and only if $\pi(\beta_1), \ldots, \pi(\beta_l)$ are linearly independent. The latter itself is 
equivalent to $\langle \beta_1, \ldots, \beta_l\rangle \cap \ker \pi = 0$. Since $\ker \pi = \langle \Delta_\gm \rangle$, we decude that
$\pi(\Sigma) \backslash \{0\}$ is a base of $R$ if and only if $\langle \beta_1, \ldots, \beta_l\rangle \cap \langle \Delta_\gm \rangle = 0$ and
the last condition is equivalent to $\langle \alpha_1, \ldots, \alpha_k \rangle = \langle \Delta_\gm \rangle$. Since $\alpha_1, \ldots, \alpha_k$
are linearly independent, $\langle \alpha_1, \ldots, \alpha_k \rangle = \langle \Delta_\gm \rangle$ is equivalent to $\{\alpha_1, \ldots, \alpha_k\}$
being a base of $\Delta_\gm$.

\np
Next we prove that every base of $R$ is of the form  $\pi(\Sigma) \backslash \{0\}$. Let $S$ be a base of $R$ and consider the set

\begin{equation} \label{eq3.2}
P := \{ \alpha \in \Delta \, | \, \pi(\alpha) {\text { is a combination of elements of }} S {\text { with coefficients in }} \ZZ_{\geq 0} \}.
\end{equation}
It is a parabolic subset of $\Delta$. Lemma \ref{lemma2.2} and Proposition 2.10 in \cite{DFG} imply that there exists a base 
$\Sigma = \{\alpha_1, \ldots, \alpha_k, \beta_1, \ldots, \beta_l\}$ such that $\{\alpha_1, \ldots, \alpha_k\}$ is a basis of 
$\Delta_\gm$ and $\{\beta_1, \ldots, \beta_l\}$ is a subset of  $P$. Then both $S$ and $\{\pi(\beta_1), \ldots, \pi(\beta_l)\}$ are bases of $R$
which generate the same cone in the $\QQ$-vector space spanned by $R$. Hence $S = \{\pi(\beta_1), \ldots, \pi(\beta_l)\} = \pi(\Sigma) \backslash \{0\}$.

\np
Finally, the existence of bases of $R$ follows immediately from the existence of bases of $\Delta$ containing bases of $\Delta_\gm$ which follows 
from Lemma \ref{lemma2.2}.
\end{proof}

\np
Bases of $R$ are closely related to positive systems in $R$. A subset $R^+ \subset R$ is called {\it a positive system} if 
\[ {\text{(i) }} R = R^+ \cup R^-, \quad {\text{(ii) }} R^+ \cap R^- = \emptyset, \quad {\text {(iii) }} \nu_1, \nu_2 \in R^+, \nu_1 + \nu_2 \in R \implies \nu_1 + \nu_2 \in R^+.\]
An element $\nu \in R^+$ is called {\it indecomposable} if it cannot be written as $\nu = \nu_1 + \nu_2$ with $\nu_1, \nu_2 \in R^+$.

\begin{proposition} \label{simple1} 
If $S$ is a base of $R$, then the subset of $R$ consisting of $\ZZ_{\geq 0}$-combinations of $S$ 
is a positive system whose indecomposable elements are the elements of $S$. 
Conversely, if $R^+$ is a positive system in $R$, then the set of indecomposable elements is a base of $R$.
\end{proposition}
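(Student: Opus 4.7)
The plan is to treat the two directions separately, leveraging Theorem \ref{main2} (every base of $R$ has the form $\pi(\Sigma)\setminus\{0\}$ for a base $\Sigma$ of $\Delta$ containing a base of $\Delta_\gm$) and Proposition \ref{prop2.4}(iii) (the descending chain from a positive root down to $0$).

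\textbf{Forward direction.} Given a base $S$ of $R$, define $R^+$ to be the set of elements of $R$ that are $\ZZ_{\geq 0}$-combinations of $S$. The positive-system axioms (i) and (iii) are immediate, and (ii) follows from the linear independence of $S$, which forces any element of $R^+ \cap (-R^+)$ to be zero. The same linear independence shows that each $\sigma \in S$ is indecomposable: a splitting $\sigma = \nu_1 + \nu_2$ with $\nu_i \in R^+$ would give a $\ZZ_{\geq 0}$-relation forcing one of the $\nu_i$ to vanish. The substantive step is to show that no $\nu \in R^+ \setminus S$ is indecomposable. By Theorem \ref{main2} write $S = \pi(\Sigma) \setminus \{0\}$ with $\Sigma = \{\alpha_1,\ldots,\alpha_k\} \sqcup \{\beta_1,\ldots,\beta_l\}$ and $\{\alpha_1,\ldots,\alpha_k\}$ a base of $\Delta_\gm$. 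For any $\alpha \in \pi^{-1}(\nu) \cap \Delta$, its $\beta_j$-coefficients equal the $S$-coefficients of $\nu$, hence are non-negative and not all zero; since $\alpha$ is a root, this forces all $\Sigma$-coefficients of $\alpha$ to be non-negative, so $\alpha$ is $\Sigma$-positive. Apply Proposition \ref{prop2.4}(iii) to get a chain $\alpha = \alpha^{(1)}, \ldots, \alpha^{(N+1)} = 0$ with $\alpha^{(i)} - \alpha^{(i+1)} \in \Sigma$, and project by $\pi$. The first index $m$ with $\pi(\alpha^{(m+1)}) \neq \nu$ yields $\sigma \in S$ such that $\pi(\alpha^{(m+1)}) = \nu - \sigma$; since $\nu \notin S$ this is a nonzero element of $R^+$, and $\nu = \sigma + (\nu - \sigma)$ exhibits $\nu$ as decomposable.

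\textbf{Converse.} Given a positive system $R^+$ with set of indecomposables $S$, the strategy is to exhibit a base $\widetilde{S}$ of $R$ with $R^+_{\widetilde{S}} = R^+$; the forward direction applied to $\widetilde{S}$ then identifies $\widetilde{S}$ with $S$, establishing that $S$ is a base. Lift $R^+$ to the subset $P := \pi^{-1}(R^+ \cup \{0\}) \cap \Delta$, a parabolic subset of $\Delta$ whose vanishing part $P \cap (-P)$ contains $\Delta_\gm$. Combining Proposition 2.10 of \cite{DFG} with the $\ker\pi$-adaptation of Lemma \ref{lemma2.2}, choose a base $\Sigma$ of $\Delta$ that simultaneously contains a base of $\Delta_\gm$ and has all its $\Sigma$-positive roots inside $P$; set $\widetilde{S} := \pi(\Sigma) \setminus \{0\}$, which is a base of $R$ by Theorem \ref{main2}. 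Each $\widetilde{S}$-positive element of $R$ is then $\pi$ of a $\Sigma$-positive root, hence lies in $R^+$; thus $R^+_{\widetilde{S}} \subset R^+$, and equality follows from the disjoint decompositions $R^+_{\widetilde{S}} \sqcup (-R^+_{\widetilde{S}}) = R = R^+ \sqcup (-R^+)$.

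\textbf{Main obstacle.} The delicate point is the converse: producing a single base $\Sigma$ of $\Delta$ that is simultaneously adapted to $\ker\pi$ and to the lifted parabolic $P$. The clean way to achieve this is to run the construction behind Lemma \ref{lemma2.2} with a linear functional on $\gh^*$ that is non-negative on $P$ and vanishes exactly on $\ker\pi$. Exhibiting such a functional amounts to a hyperplane-description of $R^+$ inside $\gt^*$, which can be extracted from \cite{DFG} or produced directly by perturbing a defining functional of $P$ by a small multiple of a $\ker\pi$-annihilating one, breaking ties within $\Delta_\gm$ without disturbing positivity elsewhere.
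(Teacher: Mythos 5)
Your proof is correct and follows essentially the same route as the paper: the converse is handled by lifting $R^+$ to the parabolic subset $P=\pi^{-1}(R^+\cup\{0\})\cap\Delta$ and invoking Lemma \ref{lemma2.2} together with Proposition 2.10 of \cite{DFG}, exactly as the paper does by referring back to the argument following \eqref{eq3.2}. The only difference is that you spell out the forward direction (via Theorem \ref{main2} and the chain from Proposition \ref{prop2.4}(iii)), which the paper dismisses as obvious; your details are sound.
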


\begin{proof}
The first statement is obvious. To prove the converse, notice that
\[P := \{ \alpha \in \Delta \, | \, \pi(\alpha) \in R^+ \}\]
is a parabolic subset of $\Delta$ and then proceed as in the paragraph following \eqref{eq3.2} above.
\end{proof}

\np 
\begin{corollary} \label{cor3.5}
Let $\nu \in R$ be a primitive root, i.e., $k \nu \in R$ implies $|k| \geq 1$. Then there exists a base $S$ of $R$ containing $\nu$. 
\end{corollary}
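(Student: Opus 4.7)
The plan is to construct a positive system $R^+$ in $R$ which contains $\nu$ as an indecomposable element; the second half of Proposition~\ref{simple1} then produces a base $S$ of $R$ with $\nu \in S$.

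First I would choose a linear functional $f$ on $\langle R \rangle \subset \gt^*$ such that $f(\nu) = 0$ and $f(\mu) \neq 0$ for every $\mu \in R$ not proportional to $\nu$. Such an $f$ exists by a standard genericity argument: inside the hyperplane $\{f : f(\nu) = 0\}$ of $\langle R \rangle^*$, the functionals that also vanish on a given $\mu \in R \setminus \QQ\nu$ form a proper subspace, and $R$ is finite. Next I would pick an auxiliary functional $g$ with $g(\nu) > 0$ and set $f_\varepsilon := f + \varepsilon g$ for $\varepsilon > 0$ sufficiently small. For $\varepsilon$ small enough, the sign of $f_\varepsilon(\mu)$ agrees with the sign of $f(\mu)$ whenever $f(\mu) \neq 0$, while for any $\mu = c\nu \in R$ we have $f_\varepsilon(\mu) = \varepsilon c g(\nu)$, whose sign is the sign of $c$. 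Consequently $f_\varepsilon$ does not vanish on $R$, and $R^+ := \{\mu \in R \,|\, f_\varepsilon(\mu) > 0\}$ is a positive system containing $\nu$.

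The crux of the argument is to verify that $\nu$ is indecomposable in $R^+$. Suppose, to the contrary, that $\nu = \mu_1 + \mu_2$ with $\mu_1, \mu_2 \in R^+$. Applying $f$ yields $f(\mu_1) + f(\mu_2) = 0$; on the other hand, $f_\varepsilon(\mu_i) > 0$ forces $f(\mu_i) \geq 0$, since $f(\mu_i) < 0$ would make $f_\varepsilon(\mu_i)$ negative for small $\varepsilon$. Hence $f(\mu_1) = f(\mu_2) = 0$, and by the choice of $f$ this means $\mu_i \in \QQ\nu$, say $\mu_i = c_i \nu$ with $c_i \in \QQ$. The condition $\mu_i \in R^+$ combined with $g(\nu) > 0$ forces $c_i > 0$, and the primitivity hypothesis $k\nu \in R \Rightarrow |k| \geq 1$ then forces $c_i \geq 1$. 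But this contradicts $c_1 + c_2 = 1$.

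Having shown that $\nu$ is indecomposable in $R^+$, Proposition~\ref{simple1} yields at once a base $S$ of $R$ containing $\nu$. The delicate point is really just this last step, where one must extract from the definition of primitivity that $\nu$ cannot be split as a sum of two elements of $R$ both lying on the open ray $\QQ_{>0}\nu$; everything else is a routine perturbation argument, and no appeal to Theorem~\ref{main2} or to the superalgebra structure is needed.
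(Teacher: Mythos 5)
Your proof is correct and follows essentially the same route as the paper: both construct a positive system $R^+$ cut out by a hyperplane near which $\pm\nu$ are the closest elements of $R$, observe that $\nu$ is then indecomposable, and invoke Proposition \ref{simple1}. Your perturbation argument simply makes explicit what the paper leaves implicit, namely the existence of such a hyperplane and the precise point where primitivity is used.
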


\begin{proof}
Consider a hyperplane $\cH$ in the real vector space spanned by $R$ which contains no elements of $R$ and such that $\pm \nu$ are the elements of $R$
closest to $\cH$. Let $R^+$ be the set of elements of $R$ on the same side of $\cH$ as $\nu$. Then $R^+$ is a positive system in $R$
and $\nu \in R$ is indecomposable. By Proposition \ref{simple1}, $\nu$ belongs to the base of $R^+$.
\end{proof}

\np
It is convenient to introduce some additional notation in which a functorial property of the projection $\Delta \longrightarrow \Ro$ can be expressed naturally.
Let $I$ be a base of $\Delta_\gm$ and assume that $I \neq \emptyset$, i.e., that $\gt \neq \gh$. 
Then $\gt^*\cong \gh^*/\langle I \rangle$ and $\Ro =\pi(\Delta)$.  Slightly abusing notation we denote $\Ro$ by
$\Delta/\langle I \rangle$. Notice that the notation $\Delta/\langle I \rangle$ carries more information than just the Kostant root system $R$, namely the
base $I$ of $\Delta_\gm$. We will use the notation $\Delta/\langle I \rangle$ without an explicit reference to $R$ or $\gm$. We also denote
the natural projection $\Delta \longrightarrow \Delta/\langle I \rangle$ by $\pi_I$. In fact, 
$\Delta/\langle I \rangle$ and $\pi_I$ make sense as long as $I$ is a subset of a base of $\Delta$. In what follows, whenever using the notation $\Delta/\langle I \rangle$
(or $\pi_I$) we will implicitly assume that $I$ is contained in a base of $\Delta$. Finally, we will also use the self-explanatory notation
$R/\langle I\rangle$ and $\pi_I: R \longrightarrow R/\langle I\rangle$.

\np
As a direct consequence of Theorem \ref{main2} we have the following statement.

\begin{proposition} \label{prop3.5}
The projection $\gh^* \mapsto \gh^*/\langle I \rangle$ induces a bijection
\[\{ {\text {\rm {bases of }}} \Delta {\text {\rm { containing }}} I \} \quad \longleftrightarrow \quad \{ {\text {\rm {bases of }}} \Delta/\langle I \rangle\}. \]
\vskip-2em \qed
\end{proposition}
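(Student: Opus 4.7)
The plan is to combine Theorem \ref{main2} with a direct analysis of positive systems. Write $\Phi$ for the map $\Sigma \mapsto \pi_I(\Sigma)\setminus\{0\}$ defined on bases of $\Delta$ containing $I$. By Theorem \ref{main2}, $\Phi$ lands in the set of bases of $\Delta/\langle I\rangle$, so well-definedness is immediate; the remaining content of the proposition is that $\Phi$ is a bijection.

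For surjectivity I would build an $I$-containing lift of a given base $S$ of $\Delta/\langle I\rangle$ explicitly. Let $S^+$ denote the positive system of $R$ generated by $S$, let $\Delta_\gm^+$ denote the positive system of $\Delta_\gm$ generated by $I$, and set
\[
P_S := \Delta_\gm^+ \sqcup \{\alpha \in \Delta \setminus \Delta_\gm \, | \, \pi_I(\alpha) \in S^+\}.
\]
A short check, using Proposition \ref{simple1} for closure of $S^+$ under addition and the analogous property for $\Delta_\gm^+$, shows that $P_S$ is a parabolic subset of $\Delta$ satisfying $P_S \sqcup (-P_S) = \Delta$. Proposition 2.10 of \cite{DFG} then produces a base $\Sigma$ of $\Delta$ whose positive system is $P_S$; the elements of $I$ remain indecomposable inside the larger cone $P_S$, so $I \subset \Sigma$, and $\Phi(\Sigma) = S$ by construction.

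For injectivity the strategy is to recover the positive system $P(\Sigma) \subset \Delta$ associated with a base $\Sigma \supset I$ from the pair $(I,S)$, where $S = \Phi(\Sigma)$. Given $\alpha \in \Delta$, write it as an integral combination of $\Sigma = I \sqcup (\Sigma\setminus I)$ with all coefficients of the same sign. Applying $\pi_I$ and using that $\pi_I(\Sigma\setminus I) = S$ is linearly independent (by Theorem \ref{main2}), a routine sign argument yields
\[
P(\Sigma) = \Delta_\gm^+ \sqcup \{\alpha \in \Delta\setminus \Delta_\gm \, | \, \pi_I(\alpha) \in S^+\} = P_S,
\]
a set depending only on $I$ and $S$. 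Since $\Sigma$ is recovered from $P(\Sigma)$ as its set of indecomposable elements, $\Phi$ is injective.

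The only mild obstacle is verifying that $P_S$ is parabolic with $I$ among its indecomposables; everything else is a direct assembly of Theorem \ref{main2}, Proposition \ref{simple1}, and the standard correspondence between bases and parabolic sets from \cite{DFG}.
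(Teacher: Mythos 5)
Your argument is correct. For comparison: the paper gives no proof of Proposition \ref{prop3.5} at all --- it is stated with an immediate \qed{} as a ``direct consequence'' of Theorem \ref{main2}. Your surjectivity step essentially replays the second half of the paper's proof of Theorem \ref{main2}: your set $P_S$ is the parabolic set of \eqref{eq3.2} with the $\Delta_\gm$-part pinned down by $I$, and the appeal to Proposition 2.10 of \cite{DFG} followed by the cone/indecomposability comparison is exactly the paper's mechanism. The one refinement you add there is worth having: Theorem \ref{main2} only asserts that a base $S$ of $R$ lifts to a base of $\Delta$ containing \emph{some} base of $\Delta_\gm$, whereas the bijection requires a lift containing the \emph{given} $I$, which your construction of $P_S$ (with $\Delta_\gm^+$ generated by $I$ and the check that elements of $I$ stay indecomposable in $P_S$) delivers. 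The injectivity argument --- recovering the positive system $P(\Sigma)=P_S$ from the pair $(I,S)$ and then $\Sigma$ as its set of indecomposables --- is the genuinely new content that the paper leaves implicit, and it is the part that actually makes the correspondence a bijection rather than merely a surjection. Two small points you gloss over but which do hold: since $P_S\sqcup(-P_S)=\Delta$, the base produced by \cite{DFG} has positive system exactly $P_S$ (not merely contained in it), and the claim $\Phi(\Sigma)=S$ ``by construction'' still needs the observation that two bases of $R$ generating the same positive cone coincide (Proposition \ref{simple1}).
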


\np 
Next we establish a functorial property of  projections of root systems which will play a central role in the rest of the paper.

\begin{proposition} \label{simple3}
Consider Kostant root systems $R_1$ and $R_2$ with corresponding projections
$\pi_1: \Delta \longrightarrow \Ro_1$ and $\pi_2: \Delta \longrightarrow \Ro_2$. 
Then the following are equivalent:
\begin{enumerate}
\item[(i)] There exists a surjection $\pi_{12}: \Ro_1\longrightarrow \Ro_2$
such that $\pi_2=\pi_{12} \circ \pi_1$; 
\item[(ii)] there exist a base $\Sigma$ of $\Delta$ and nested subsets $I_1$ and $I_2$ of $\Sigma$, i.e. $I_1 \subset I_2 \subset \Sigma$  
such that $\Ro_1 = \Delta/\langle I_1 \rangle$ and $\Ro_2 = \Delta/\langle I_2 \rangle$.
\end{enumerate}
Assuming that {\rm(i)} and {\rm(ii)} hold,  $S_2 \subset R_2$ 
is a base of $R_2$ if and only if $S_2=\pi_{12}(S_1)\setminus\{0\}$ for some base 
$S_1$ of $R_1$ containing $\pi_1(I_2 \setminus I_1)$.
\end{proposition}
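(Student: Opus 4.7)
My strategy is to establish the equivalence of (i) and (ii) first, and then exploit that construction to prove the characterization of bases. The implication (ii) $\Rightarrow$ (i) is automatic: the nesting $\langle I_1 \rangle \subset \langle I_2 \rangle$ induces a canonical linear surjection $\gh^*/\langle I_1 \rangle \twoheadrightarrow \gh^*/\langle I_2 \rangle$ whose restriction to $\Ro_1$ lands in $\Ro_2$ and is the required $\pi_{12}$, automatically satisfying $\pi_2 = \pi_{12} \circ \pi_1$.

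For (i) $\Rightarrow$ (ii), I would first extract the inclusion $\Delta_{\gm_1} \subset \Delta_{\gm_2}$ from the hypothesis. For any $\alpha \in \Delta_{\gm_1}$ we also have $-\alpha \in \Delta_{\gm_1}$ by Proposition \ref{prop2.4}(i), so the linearity of $\pi_2$ combined with $\pi_2 = \pi_{12} \circ \pi_1$ gives $\pi_2(\alpha) = \pi_{12}(0) = \pi_2(-\alpha) = -\pi_2(\alpha)$, whence $\pi_2(\alpha) = 0$. To produce a base $\Sigma$ with nested $I_1 \subset I_2 \subset \Sigma$, I would refine the argument of Lemma \ref{lemma2.2} using a lexicographic linear functional $f := g_2 + \epsilon g_1 + \epsilon^2 g_0$, where $g_2, g_1$ are real-valued on $\Delta$ and vanish there exactly on $\ker\pi_2$, $\ker\pi_1$ respectively, and $g_0$ does not vanish on any root. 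For sufficiently small $\epsilon > 0$, the sign of $f$ on each root is dictated by the first nonvanishing summand, so $f$ determines a positive system and a base $\Sigma$ of $\Delta$; the sign argument of Lemma \ref{lemma2.2} then confirms that $I_2 := \Sigma \cap \ker\pi_2$ and $I_1 := \Sigma \cap \ker\pi_1$ are bases of $\Delta_{\gm_2}$ and $\Delta_{\gm_1}$ respectively, with $I_1 \subset I_2 \subset \Sigma$ by construction.

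For the forward direction of the characterization, given a base $S_2$ of $R_2$, I would apply Proposition \ref{prop3.5} to the inclusion $I_2 \subset \Sigma$ to obtain the unique base $\Sigma'$ of $\Delta$ containing $I_2$ (and therefore $I_1$) with $\pi_2(\Sigma') \setminus \{0\} = S_2$. Theorem \ref{main2} then shows $S_1 := \pi_1(\Sigma') \setminus \{0\}$ is a base of $R_1$; one checks immediately that $S_1 \supset \pi_1(I_2 \setminus I_1)$ and $\pi_{12}(S_1) \setminus \{0\} = \pi_2(\Sigma') \setminus \{0\} = S_2$.

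The backward direction will be the main hurdle. Given a base $S_1$ of $R_1$ containing $\pi_1(I_2 \setminus I_1)$, Theorem \ref{main2} provides a base $\Sigma''$ of $\Delta$ containing some base $I_1''$ of $\Delta_{\gm_1}$ with $\pi_1(\Sigma'') \setminus \{0\} = S_1$. For each $\gamma \in I_2 \setminus I_1$, the inclusion $\pi_1(\gamma) \in S_1$ supplies a unique $\sigma_\gamma \in \Sigma'' \setminus I_1''$ with $\pi_1(\sigma_\gamma) = \pi_1(\gamma)$; since $\gamma \in \ker\pi_2$ and $\sigma_\gamma - \gamma \in \ker\pi_1 \subset \ker\pi_2$, one deduces $\sigma_\gamma \in \Sigma'' \cap \ker\pi_2$. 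A dimension count based on Lemma \ref{lemma200} applied to $\gm_2$ then shows that the linearly independent set $I_1'' \cup \{\sigma_\gamma : \gamma \in I_2 \setminus I_1\}$ has cardinality $\dim\langle\Delta_{\gm_2}\rangle$ and therefore coincides with $\Sigma'' \cap \ker\pi_2$. The argument of Lemma \ref{lemma2.2} then shows $\Sigma'' \cap \ker\pi_2$ is a base of $\Delta_{\gm_2}$, and a final application of Theorem \ref{main2} yields that $\pi_{12}(S_1) \setminus \{0\} = \pi_2(\Sigma'') \setminus \{0\}$ is a base of $R_2$.
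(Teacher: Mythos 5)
Your proof is correct and follows the same overall strategy as the paper's: both directions of the base characterization are reduced to Theorem \ref{main2} (equivalently, Proposition \ref{prop3.5}), applied first with $I_2$ and then with $I_1$. The difference is that you supply explicit arguments at two points where the paper is terse or merely asserts the claim. First, for (i) $\Rightarrow$ (ii) the paper simply says one can ``choose $I_1 \subset I_2$'' and invoke Theorem \ref{main2}, whereas your lexicographic functional $g_2 + \epsilon g_1 + \epsilon^2 g_0$ combined with the sign argument of Lemma \ref{lemma2.2} genuinely produces a single base $\Sigma$ for which $\Sigma \cap \ker\pi_1$ and $\Sigma \cap \ker\pi_2$ are simultaneously bases of $\Delta_{\gm_1}$ and $\Delta_{\gm_2}$; this is a cleaner justification of the nesting. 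Second, in the backward direction the paper asserts without proof that a base $S_1 \supset \pi_1(I_2\setminus I_1)$ lifts to a base $\Sigma$ of $\Delta$ containing $I_2$; your dimension count, showing that $\Sigma'' \cap \ker\pi_2$ has full cardinality $|I_2|$ and is therefore a base of $\Delta_{\gm_2}$, proves the (slightly weaker but sufficient) statement that $\Sigma''$ contains \emph{some} base of $\Delta_{\gm_2}$, which is exactly what Theorem \ref{main2} requires. So the skeleton is the paper's, but your write-up closes the two real gaps in it; the only cost is length.
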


\begin{proof}
The fact that (ii) implies (i) is clear. To prove that (i) implies (ii), assume that $\pi_{12}$ exists. Then 
$\Delta_{\gm_1} \subset \Delta_{\gm_2}$, where $\gm_1$ and $\gm_2$ are the reductive subalgebras of $\gg$ corresponding to $R_1$ and $R_2$ respectively.
We can then choose $I_1 \subset I_2$ such that $\Delta_{\gm_j} = \langle I_j \rangle$ for $j = 1,2$.
The existence a base $\Sigma$ containing $I_2$ (and hence  $I_1 \subset I_2 \subset \Sigma$) follows from Theorem \ref{main2}.

\np
If $S_2$ is a base of $R_2$, then $S_2=\pi_2(\Sigma)\setminus  \{0\}$ for some base $\Sigma$ of $\Delta$ containing  $I_2$.  Since
$I_1 \subset I_2$, then $S_1:= \pi_1(\Sigma)\setminus\{0\}$ is a base of $R_1$ containing $\pi_1(I_2\setminus I_1)$.
Conversely, if $S_1$ is a base of $R_1$ containing $\pi_1(I_2\setminus I_1)$
and $S_2=\pi_{12}(S_1)\setminus\{0\}$, then $S_1=\pi_1(\Sigma)\setminus\{0\}$ for some base $\Sigma$ containing $I_2$ (hence,
$\Sigma \supset I_2 \supset I_1$).
Then, applying Theorem \ref{main2} and the equivalence of (i) and (ii), we conclude that 
$\pi_2(\Sigma)\setminus\{0\}=\pi_{12}(\pi_1(\Sigma))\setminus\{0\}
=\pi_{12}(S_1)\setminus\{0\}=:S_2$ is a base of $R_2$.
\end{proof}

\np
Note that  Proposition \ref{simple3} can be generalized by replacing $\Delta$ with any Kostant root
system $R$. However we will not need a statement of this generality, so we omit this discussion. Moreover,
the constructions and results so far can be carried over to infinite dimensional Kac-Moody Lie superalgebras. 
Since the modifications are minimal, we leave the details to the reader.

\section{Graphs of Kostant root systems} \label{graphs}

\np
In this section we want to associate various graphs to a Kostant root
system $R$ and relate their properties to the
properties of the decomposition (\ref{krs}).

\begin{definition}
Let $R$ be a Kostant root system with a base $S$. 
The graph $\Gamma_{R,S}$ is defined as follows:
the vertices  of  $\Gamma_{R,S}$ are the elements of $R$ and 
there is an edge between two vertices $\mu$ and
$\nu$ if and only if $\mu-\nu \in \pm S$. We say 
that the edge between $\mu$ and $\nu$ is \textit{labelled} by
 the corresponding simple root $\mu-\nu$ or $\nu - \mu$. 
  \end{definition}

\begin{definition}
In the notation above, if $I \subset S$,  $\pi_I: R \longrightarrow R/\langle I \rangle$, and 
$\tau \in   R/\langle I \rangle$, the graph $\Gamma_{R,S,I}^\tau \subset \Gamma_{R,S}$ is defined as follows:
 $v(\Gamma_{R,S,I}^\tau) = \pi^{-1}(\tau)$ and there is an edge between two vertices $\mu$ and
$\nu$ if and only if $\mu -\nu \in \pm I$. 
\end{definition} 

\np 
Clearly the graph $\Gamma_{\Delta, \Sigma, I}^\nu$, where $I \subset \Sigma$ are bases of $\Delta_\gm$ and $\Delta$ respectively,
 is related to the Kostant root space $\gg_\nu$. More precisely we have the following statement.

\begin{proposition} \label{conn-graph2} Let $\gg$ be a reductive Lie superalgebra with decomposition \eqref{krs}.
Assume that $\Sigma$ is a base of $\Delta$ containing a base $I$ of $\Delta_\gm$.  
If  $\nu \in R$,
then $\gg_\nu$ is an irreducible $\gm$-module if and only if the graph $\Gamma^\nu_{\Delta, \Sigma, I}$ is connected. 
\end{proposition}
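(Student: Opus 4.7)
The plan is to translate the representation-theoretic condition on $\gg_\nu$ into the combinatorial condition on $\Gamma^\nu_{\Delta,\Sigma,I}$ by tracking how the simple root vectors of $\gm$ associated with $I$ permute the one-dimensional summands of $\gg_\nu=\oplus_{\alpha\in\pi^{-1}(\nu)}\gg_\alpha$.

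First I would record two preparatory facts that power both directions. (a) Every root space of $\gg$ is one-dimensional, which is precisely why Definition \ref{reductive} uses $\ggl(m|m)$ in place of $\gpsl(m|m)$; combined with Proposition \ref{prop2.4}(ii), this upgrades to the equality $[\gg_\alpha,\gg_\beta]=\gg_{\alpha+\beta}$ whenever $\alpha+\beta\in\Delta$. (b) Applying Proposition \ref{prop2.4}(iii) to $\gm$ with base $I$ of $\Delta_\gm$, together with its analogue for negative roots, shows that $\gm$ is generated as a Lie superalgebra by $\gh$ and $\{\gg_{\pm\alpha_i}\st\alpha_i\in I\}$. Finally, since $I\subset\ker\pi$, for any $\alpha\in\pi^{-1}(\nu)$ and any $\alpha_i\in I$ the vector $\alpha\pm\alpha_i$ is either not a root or again lies in $\pi^{-1}(\nu)$, and by (a) the bracket $[\gg_{\pm\alpha_i},\gg_\alpha]$ is then either zero or equals the summand $\gg_{\alpha\pm\alpha_i}$ of $\gg_\nu$, the latter occurring precisely when an edge of $\Gamma^\nu_{\Delta,\Sigma,I}$ joins $\alpha$ to $\alpha\pm\alpha_i$.

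For the forward implication, suppose $\gg_\nu$ is irreducible and, for contradiction, that $\Gamma^\nu_{\Delta,\Sigma,I}$ has connected components $T_1,\dots,T_r$ with $r\geq 2$. Setting $V_j:=\oplus_{\alpha\in T_j}\gg_\alpha$, the Cartan $\gh$ preserves each $V_j$ trivially, while the observation above shows that $\gg_{\pm\alpha_i}\cdot V_j\subset V_j$ because any nonzero image lies in a summand whose label sits in the same component as $\alpha$. Fact (b) then makes $V_j$ a proper nonzero $\gm$-submodule, contradicting irreducibility. For the reverse implication, assume connectedness and let $W\subset\gg_\nu$ be a nonzero $\gm$-submodule. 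By $\gh$-stability, $W=\oplus_{\alpha\in T}\gg_\alpha$ for some nonempty $T\subset\pi^{-1}(\nu)$, and the final observation above, sharpened by one-dimensionality, shows that whenever $\alpha\in T$ and $\alpha\pm\alpha_i\in\pi^{-1}(\nu)$, one has $\gg_{\alpha\pm\alpha_i}=[\gg_{\pm\alpha_i},\gg_\alpha]\subset W$. Thus $T$ is closed under the edges of $\Gamma^\nu_{\Delta,\Sigma,I}$, and connectedness forces $T=\pi^{-1}(\nu)$ and $W=\gg_\nu$.

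The main care needed is concentrated in the preparatory step rather than in either implication. One must know both that all root spaces in play are one-dimensional---so that nonvanishing of a bracket already recovers the full neighbouring root space, which would fail for $\gpsl(m|m)$---and that $\gm$ is genuinely generated by its Cartan together with its simple root spaces. Both are built into the setup of Definition \ref{reductive} and Proposition \ref{prop2.4}, after which the proof reduces to graph-theoretic bookkeeping on $\Gamma^\nu_{\Delta,\Sigma,I}$.
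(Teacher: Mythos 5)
Your proof is correct and follows essentially the same route as the paper's: both arguments identify the vertices of $\Gamma^\nu_{\Delta,\Sigma,I}$ with the weights of the $\gm$-module $\gg_\nu$ and its edges with the action of the simple root vectors $\gg_{\pm\alpha_i}$, $\alpha_i\in I$, so that connected components span $\gm$-submodules and, conversely, connectedness lets any nonzero weight submodule propagate to all of $\gg_\nu$. You merely make explicit the one-dimensionality of root spaces and the generation of $\gm$ by $\gh$ and its simple root spaces, which the paper's terse proof leaves implicit.
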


\begin{proof} Note that $\Gamma^\nu_{\Delta, \Sigma, I}$ is the set of weights of the $\gm$-module $\gg_\nu$. 
If $\Gamma^\nu_{\Delta, \Sigma, I}$ is connected then every weight space of $\gg_\nu$ generates all of its weight spaces and
since any submodule of $\gg_\nu$ is a weight submodule, we conclude that $\gg_\nu$ is irreducible.
Conversely, if $\Gamma^\nu_{\Delta, \Sigma, I}$ is not connected, every connected component contains a highest weight and hence $\gg_\nu$
is not irreducible.
\end{proof}

\begin{definition} \label{def4.40}
Let $R$ be a Kostant root system with a base $S$ and let $I$ be a subset of $S$ with corresponding projection
$\pi_I$. Set $S/\langle I \rangle := \pi_I(S) \backslash \{0\}$. 
If $\Gamma$ is a subgraph of $\Gamma_{R,S}$ such that $v(\Gamma) \cap \langle I \rangle = \emptyset$,
we define the subgraph $\pi(\Gamma)$ of $\Gamma_{R/\langle I \rangle, S/\langle I \rangle}$ as follows:
$v(\pi(\Gamma)) = \pi(v(\Gamma))$ and $e(\pi(\Gamma)) = \pi(e(\Gamma)) \backslash \{\text{loops}\}$,
where a loop is an edge whose vertices coincide.
\end{definition}

\np
The following statement is obvious.

\begin{proposition} \label{graph-conn} In the notation of Definition \ref{def4.40},
if $\Gamma$ is connected, so is $\pi(\Gamma)$. \qed
\end{proposition}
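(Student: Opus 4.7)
The plan is to lift connectivity of $\Gamma$ along $\pi$ path-by-path. The key observation is that the only way the projection can ``destroy'' an edge of $\Gamma$ is by collapsing its two endpoints to the same vertex of $R/\langle I\rangle$, and such a collapse is harmless for connectivity because the collapsed endpoints are then identified in $\pi(\Gamma)$.

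First I would pick two arbitrary vertices $\bar\mu, \bar\nu$ of $\pi(\Gamma)$ and lift them to $\mu, \nu \in v(\Gamma)$ with $\pi(\mu) = \bar\mu$ and $\pi(\nu) = \bar\nu$, which is possible because $v(\pi(\Gamma)) = \pi(v(\Gamma))$ by Definition \ref{def4.40}. Connectedness of $\Gamma$ then produces a path $\mu = \mu_0, \mu_1, \ldots, \mu_k = \nu$ in $\Gamma$ with $\mu_i - \mu_{i+1} \in \pm S$ for every $i$. Applying $\pi$ term by term yields a sequence $\bar\mu = \pi(\mu_0), \pi(\mu_1), \ldots, \pi(\mu_k) = \bar\nu$ of vertices of $\pi(\Gamma)$.

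I would then analyze each consecutive difference $\pi(\mu_i) - \pi(\mu_{i+1}) \in \pm\pi(S)$ by splitting into two cases: either the difference is $0$, in which case $\pi(\mu_i) = \pi(\mu_{i+1})$ and the repeated entry can simply be deleted from the sequence; or the difference lies in $\pm(S/\langle I\rangle) = \pm(\pi(S)\setminus\{0\})$, in which case the edge of $\Gamma$ between $\mu_i$ and $\mu_{i+1}$ projects under $\pi$ to a genuine (non-loop) edge of $\pi(\Gamma)$ between $\pi(\mu_i)$ and $\pi(\mu_{i+1})$, using the defining equality $e(\pi(\Gamma)) = \pi(e(\Gamma)) \setminus \{\text{loops}\}$. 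After deleting the repeated entries the remaining sequence is a walk from $\bar\mu$ to $\bar\nu$ in $\pi(\Gamma)$, which proves connectivity.

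I do not anticipate any real obstacle here; the argument is essentially bookkeeping. The only minor point to keep in mind is that the hypothesis $v(\Gamma) \cap \langle I\rangle = \emptyset$ is used solely to make $\pi(\Gamma)$ a well-defined subgraph of $\Gamma_{R/\langle I\rangle, S/\langle I\rangle}$ (so that $0 \notin v(\pi(\Gamma))$); it plays no role in the connectivity step above. This is why the statement deserves the label ``obvious''.
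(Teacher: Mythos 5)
Your argument is correct and is precisely the bookkeeping that the paper omits when it declares the statement obvious: project a path, discard the steps that become loops, and observe that the surviving steps are edges of $\pi(\Gamma)$ by Definition \ref{def4.40}. Your closing remark about the role of the hypothesis $v(\Gamma)\cap\langle I\rangle=\emptyset$ is also accurate.
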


\np
We now want to make few observations on graphs that will be instrumental
for proving Theorem \ref{main2} and the irreducibility of the $\gm$-module $\gg_\nu$.
Let $\Sigma$ be a base of $\Delta$ and assume that $I \subset J \subset \Sigma$ . 
By Proposition \ref{simple3} we have the commutative diagram

\begin{equation} \label{eq460}
\xymatrix{
         &  \Delta \ar[dr]^{\pi_J} \ar[dl]_{\pi_I}  & \\
\Delta/\langle I \rangle   \ar[rr]^{\pi_{I,J}} &  &  \Delta/\langle J \rangle}
\end{equation}
where $\pi_I$, $\pi_J$, and $\pi_{I,J}$ are the projections described by 
Proposition \ref{simple3}.

\np
For a nonzero element $\nu \in  \Delta/\langle J\rangle$ consider the graphs
$\Gamma^\nu_{\Delta, \Sigma, J} \subset \Gamma_{\Delta,\Sigma}$ and 
$\Gamma^\nu_{\Delta/\langle I \rangle, \Sigma/\langle I \rangle, J/\langle I \rangle} \subset \Gamma_{\Delta/\langle I \rangle,\Sigma/\langle I \rangle}$.
The lemma below compares $\pi_I(\Gamma^\nu_{\Delta, \Sigma, J})$ and 
$\Gamma^\nu_{\Delta/\langle I \rangle, \Sigma/\langle I \rangle, J/\langle I \rangle}$
as subgraphs of $\Gamma_{\Delta/\langle I \rangle,\Sigma/\langle I \rangle}$.

\begin{lemma} \label{graph-lemma}
In the notation above we have
\begin{enumerate}
\item[(i)] $v(\pi_I(\Gamma^\nu_{\Delta, \Sigma, J}))=v(\Gamma^\nu_{\Delta/\langle I \rangle, \Sigma/\langle I \rangle, J/\langle I \rangle})$;
\item[(ii)]  $e(\pi_I(\Gamma^\nu_{\Delta, \Sigma, J})) \subset e(\Gamma^\nu_{\Delta/\langle I \rangle, \Sigma/\langle I \rangle, J/\langle I \rangle})$;
\item[(iii)] if $\Gamma^\tau_{\Delta, \Sigma, K}$ is connected for every $K \subset \Sigma$ and every nonzero $\tau \in \Delta/\langle K \rangle$, then
$e(\pi_I(\Gamma^\nu_{\Delta, \Sigma, J})) = e(\Gamma^\nu_{\Delta/\langle I \rangle, \Sigma/\langle I \rangle, J/\langle I \rangle})$.
\end{enumerate}
\end{lemma}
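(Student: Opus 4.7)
For (i), I would use the commutative triangle $\pi_J = \pi_{I,J} \circ \pi_I$ of \eqref{eq460} to obtain $\pi_J^{-1}(\nu) = \pi_I^{-1}(\pi_{I,J}^{-1}(\nu))$, and then apply $\pi_I$ (which is surjective) to conclude. Part (ii) is likewise direct: if $(\mu, \mu')$ is an edge of $\Gamma^\nu_{\Delta, \Sigma, J}$ with $\mu - \mu' = \beta \in \pm J$ and $\pi_I(\mu) \neq \pi_I(\mu')$ (so the projected edge is not discarded as a loop by Definition \ref{def4.40}), then $\beta \notin \pm I$, so $\pi_I(\beta) \in \pm(J/\langle I\rangle)$, and the projected pair is an edge of $\Gamma^\nu_{\Delta/\langle I\rangle, \Sigma/\langle I\rangle, J/\langle I\rangle}$.

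The substance lies in (iii). Given an edge $(\bar\mu, \bar\mu')$ of $\Gamma^\nu_{\Delta/\langle I\rangle, \Sigma/\langle I\rangle, J/\langle I\rangle}$, I would first note that $\pi_I$ is injective on $J\setminus I$ (since $J\subset \Sigma$ is linearly independent), so there is a unique $\beta \in J\setminus I$ with $\bar\mu - \bar\mu' = \pi_I(\beta)$. Setting $A := \pi_I^{-1}(\bar\mu) \cap \pi_J^{-1}(\nu)$ and $B := \pi_I^{-1}(\bar\mu') \cap \pi_J^{-1}(\nu)$, the task is to produce $\mu \in A$ and $\mu' \in B$ with $\mu - \mu' = \beta$. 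The plan is to take $K := I \cup \{\beta\} \subset J$, set $\tau := \pi_K(\mu_0)$ for any $\mu_0 \in A$, verify $\pi_K(B) = \{\tau\}$ using $\beta \in K$, and then invoke the connectedness hypothesis for $\Gamma^\tau_{\Delta, \Sigma, K}$ (noting $\tau \neq 0$, since otherwise $\nu = \pi_{K,J}(\tau)$ would vanish).

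Inside the connected graph $\Gamma^\tau_{\Delta, \Sigma, K}$, edges labeled by elements of $I$ preserve $\pi_I$-fibers, while edges labeled by $\pm\beta$ move between $\pi_I$-fibers whose images differ by $\pm\bar\beta$. Collapsing each $\pi_I$-fiber to a single vertex yields a quotient graph whose vertices are the $\bar\eta \in \Delta/\langle I\rangle$ of the form $\bar\mu + k\bar\beta$ that actually occur, with edges only between pairs that are consecutive along this $\bar\beta$-string. Connectedness of $\Gamma^\tau_{\Delta, \Sigma, K}$ forces this quotient to be connected, which in turn forces the occurring $k$-values to form an interval and every consecutive pair to be joined by an edge. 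Since both $\bar\mu$ ($k=0$) and $\bar\mu'$ ($k=-1$) occur and are consecutive, there must be a $\beta$-edge between $A$ and $B$ in $\Gamma^\tau_{\Delta, \Sigma, K}$, and this is the desired lift.

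The main obstacle I expect is the bookkeeping in the final step---verifying that the quotient graph genuinely embeds into a path graph on $\ZZ$ with $A$ and $B$ corresponding to adjacent vertices---but this follows from $K \subset J$ (which forces $\pi_K^{-1}(\tau) \subset \pi_J^{-1}(\nu)$) together with the injectivity of $\pi_I|_{J\setminus I}$; once these are in hand, the path-graph argument is elementary.
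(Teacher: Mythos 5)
Your proposal is correct and follows essentially the same route as the paper: parts (i) and (ii) from the commutativity of the diagram \eqref{eq460}, and part (iii) by reducing each edge to the one-step situation $K = I \cup \{\beta\}$ and then running the string argument in the connected graph $\Gamma^\tau_{\Delta,\Sigma,K}$, where $I$-labelled edges stay within $\pi_I$-fibers and $\beta$-labelled edges join consecutive fibers, forcing the occurring fibers to form an interval with every consecutive pair linked. The paper merely packages this as a standalone case $|J\setminus I|=1$ followed by a reduction, which is the same argument in a different order.
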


\begin{proof}
Statements (i) and (ii) follow from the commutativity of \eqref{eq460}.

\np
To prove (iii) we first consider the case when the cardinality $J \setminus I$ equals 1. 
Let $J = I \cup \{\alpha\}$. Set $\lambda := \pi_I(\alpha) \in \Sigma/\langle I \rangle$ and fix 
$\mu \in v(\Gamma^\nu_{\Delta/\langle I \rangle, \Sigma/\langle I \rangle, J/\langle I \rangle})$. Note that $J/\langle I \rangle = \{\lambda\}$.
Then $v(\Gamma^\nu_{\Delta/\langle I \rangle, \Sigma/\langle I \rangle, J/\langle I \rangle}) \subset \mu + \ZZ \lambda$, say 
\[v(\Gamma^\nu_{\Delta/\langle I \rangle, \Sigma/\langle I \rangle, J/\langle I \rangle}) = \{\mu + a \lambda \, | \, a \in A\}\]
for some finite set $A \subset \ZZ$. Moreover, two vertices $\mu + a' \lambda$ and $\mu + a'' \lambda$ are connected if and only if $|a' - a''| = 1$.
Furthermore,

\begin{equation} \label{eq480}
v(\Gamma^\nu_{\Delta, \Sigma, J}) = \sqcup_{a \in A}  \, 
v(\Gamma^{\mu+a \lambda}_{\Delta, \Sigma, I}).
\end{equation}
The edges of $\Gamma^\nu_{\Delta, \Sigma, J}$ are labeled by elements of $J = I \cup \{\alpha\}$; more precisely, elements of $I$ label
edges within components $\Gamma^{\mu+a \lambda}_{\Delta, \Sigma, I}$ and $\alpha$ labels edges between different components in \eqref{eq480}.
Since  $\Gamma^\nu_{\Delta, \Sigma, J}$ is connected,
$A$ is an interval and any two consecutive components in \eqref{eq480} are connected by $\alpha$. This proves that every edge of 
$\Gamma^\nu_{\Delta/\langle I \rangle, \Sigma/\langle I \rangle, J/\langle I \rangle}$ lifts to an edge of $\Gamma^\nu_{\Delta, \Sigma, J}$, i.e., 
that $e(\pi_I(\Gamma^\nu_{\Delta, \Sigma, J})) = e(\Gamma^\nu_{\Delta/\langle I \rangle, \Sigma/\langle I \rangle, J/\langle I \rangle})$.

\np
To complete the proof for general $I \subset J \subset \Sigma$, consider an edge $e$ of 
$\Gamma^\nu_{\Delta/\langle I \rangle, \Sigma/\langle I \rangle, J/\langle I \rangle}$ between the vertices $\mu'$ and $\mu''$ labeled by 
$\lambda \in  J/\langle I \rangle$. Let $\lambda = \pi_I(\alpha)$ with $\alpha \in J$. Set $K: = I \cup \{\alpha\}$ and $\tau:= \pi_{I,K}(\mu') = \pi_{I,K}(\mu'')$.
We have already proved that  
$e(\pi_I(\Gamma^\tau_{\Delta, \Sigma, K})) = e(\Gamma^\tau_{\Delta/\langle I \rangle, \Sigma/\langle I \rangle, K/\langle I \rangle})$.
In particular $e$ lifts to an edge of $\Gamma^\tau_{\Delta, \Sigma, K}$ labeled by $\alpha$. Since $v(\Gamma^\tau_{\Delta, \Sigma, K}) \subset
v(\Gamma^\nu_{\Delta, \Sigma, J})$ and $\alpha \in J$, we conclude that $e$ lifts to an edge of $\Gamma^\nu_{\Delta, \Sigma, J}$.
\end{proof}

\np
We complete this section by recording a result  which is crucial for the proof of 
 Theorem \ref{main3} (i). 

\np
\begin{proposition} \label{conn-graph3}
In the notation of Lemma \ref{graph-lemma}, if $\Gamma^\nu_{\Delta, \Sigma, J}$ is connected, so is 
$\Gamma^\nu_{\Delta/\langle I \rangle, \Sigma/\langle I \rangle, J/\langle I \rangle}$.
\end{proposition}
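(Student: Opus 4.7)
The plan is to deduce the statement by combining the two parts of Lemma \ref{graph-lemma} already in hand with the elementary Proposition \ref{graph-conn} on projections of connected graphs. The strategy is to push $\Gamma^\nu_{\Delta, \Sigma, J}$ forward along $\pi_I$ and then compare the image with $\Gamma^\nu_{\Delta/\langle I \rangle, \Sigma/\langle I \rangle, J/\langle I \rangle}$.

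First I would verify that $\pi_I(\Gamma^\nu_{\Delta, \Sigma, J})$ makes sense in the framework of Definition \ref{def4.40}, i.e., that $v(\Gamma^\nu_{\Delta, \Sigma, J}) \cap \langle I \rangle = \emptyset$. This is immediate from $I \subset J$: any vertex of $\Gamma^\nu_{\Delta, \Sigma, J}$ is a root $\alpha \in \Delta$ with $\pi_J(\alpha) = \nu \neq 0$, whereas $\alpha \in \langle I \rangle \subset \langle J \rangle$ forces $\pi_J(\alpha) = 0$. Hence the subgraph $\pi_I(\Gamma^\nu_{\Delta, \Sigma, J})$ of $\Gamma_{\Delta/\langle I \rangle, \Sigma/\langle I \rangle}$ is defined, and Proposition \ref{graph-conn} gives that it is connected as soon as $\Gamma^\nu_{\Delta, \Sigma, J}$ is.

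Now I would apply Lemma \ref{graph-lemma}: part (i) says $\pi_I(\Gamma^\nu_{\Delta, \Sigma, J})$ and $\Gamma^\nu_{\Delta/\langle I \rangle, \Sigma/\langle I \rangle, J/\langle I \rangle}$ share the same vertex set, while part (ii) says the edge set of the former is contained in that of the latter. In other words, $\pi_I(\Gamma^\nu_{\Delta, \Sigma, J})$ is a spanning subgraph of $\Gamma^\nu_{\Delta/\langle I \rangle, \Sigma/\langle I \rangle, J/\langle I \rangle}$. Since adding edges to a connected graph on the same vertex set keeps it connected, this completes the argument. There is no real obstacle here: Proposition \ref{conn-graph3} is a formal consequence of the functorial behavior packaged in Lemma \ref{graph-lemma}, and the only point requiring a moment of thought is the disjointness check that legitimizes the projection operation of Definition \ref{def4.40}.
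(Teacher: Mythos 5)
Your proposal is correct and follows essentially the same route as the paper: apply Proposition \ref{graph-conn} to get connectedness of $\pi_I(\Gamma^\nu_{\Delta, \Sigma, J})$, then invoke Lemma \ref{graph-lemma} (i) and (ii) to see that the target graph has the same vertices and possibly more edges. Your preliminary check that $v(\Gamma^\nu_{\Delta, \Sigma, J}) \cap \langle I \rangle = \emptyset$ is a small point the paper leaves implicit, but it does not alter the argument.
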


\begin{proof} Assume that $\Gamma^\nu_{\Delta, \Sigma, J}$ is connected.
Proposition \ref{graph-conn} implies that $\pi_I(\Gamma^\nu_{\Delta, \Sigma, J})$ is connected.
Since, by Lemma \ref{graph-lemma} (i) and (ii), $\Gamma^\nu_{\Delta/\langle I \rangle, \Sigma/\langle I \rangle, J/\langle I \rangle}$
has the same vertices as $\pi_I(\Gamma^\nu_{\Delta, \Sigma, J})$ and, possibly, more edges, we conclude that 
$\Gamma^\nu_{\Delta/\langle I \rangle, \Sigma/\langle I \rangle, J/\langle I \rangle}$ is connected as well.
\end{proof}

\section{Proof of Theorem \ref{main3}} \label{irr-sec}

\np 
We start with proving part (i) of Theorem \ref{main3}  in the case when $\gm \subset \gg_\e$. 
The proof is based on a result of Stembridge, see \cite{St}, Section 1. Let $\Lambda$ denote the
weight lattice of a semisimple Lie algebra $\gs$ and let $Q \subset \Lambda$ denote the root lattice of $\gs$.
According to Corollary 1.13 in \cite{St}, each nontrivial coset of $\Lambda/Q$ contains exactly one
minuscule weight. Further, Remark 1.11 and Proposition 1.12 in \cite{St} combine into the following statement.

\begin{proposition} \label{stem-prop}
Let $\Lambda/Q=\{\omega_1 = [0], \omega_2, \dots, \omega_l\}$.
Then there exist $\lambda_1, \dots, \lambda_l$, with
$\lambda_i \in \omega_i$, such that any simple module $W$
of $\gs$, whose support is in $\omega_i$, has $\lambda_i$ as
one of its weights. \qed
\end{proposition}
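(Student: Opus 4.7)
The plan is to construct the weights $\lambda_i$ by exploiting the structure theory of minuscule weights developed in \cite{St}. First I would dispose of the trivial coset by setting $\lambda_1 := 0$; for each nontrivial coset $\omega_i$, Corollary 1.13 of \cite{St} supplies a unique minuscule weight in $\omega_i$, and this is what I would take as $\lambda_i$.

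Next I would establish (or extract from Remark 1.11 and Proposition 1.12 of \cite{St}) the key minimality property: among all dominant weights of the coset $\omega_i$, the chosen $\lambda_i$ is minimal in the usual dominance order, i.e., for any dominant $\mu \in \omega_i$ the difference $\mu - \lambda_i$ is a nonnegative integral combination of simple roots. For the trivial coset this reduces to the classical statement that $0$ is the smallest dominant weight in $Q$; for a nontrivial coset it follows from the characterization of the minuscule weight as an extremal vertex of the appropriate weight polytope modulo $Q$.

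The final step is a clean application of the standard weight-multiplicity criterion. Any simple $\gs$-module $W$ with support in $\omega_i$ is isomorphic to $V(\mu)$ for some dominant $\mu$, and since the support of $V(\mu)$ is contained in $\mu + Q$ we must have $\mu \in \omega_i$. By the preceding step, $\lambda_i \leq \mu$ in the dominance order and $\mu - \lambda_i \in Q$, so the criterion that a dominant weight $\nu$ is a weight of $V(\mu)$ if and only if $\nu \leq \mu$ and $\mu - \nu \in Q$ produces $\lambda_i$ as a weight of $W$, as required.

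The main obstacle is the minimality property in the second step, which is not formal and genuinely uses the classification of minuscule weights; fortunately, it is precisely what is worked out in the cited results of Stembridge, so in practice the argument amounts to assembling his statements in the correct order and verifying the trivial coset separately.
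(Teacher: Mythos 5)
Your proposal is correct and follows essentially the same route as the paper, which offers no argument beyond citing Stembridge: it states that Remark 1.11, Proposition 1.12, and Corollary 1.13 of \cite{St} combine to give the result, and your write-up simply unpacks that combination (minuscule weight or $0$ as the minimum of the dominance order on dominant weights in each coset of $\Lambda/Q$, followed by the standard criterion for a dominant weight to occur in $V(\mu)$). The only point worth flagging is that you should make explicit that the unique minimal element of each coset is in fact a minimum, which follows since every dominant weight dominates some minimal one and Stembridge shows there is only one per coset.
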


\begin{proposition} \label{main-even}
Theorem \ref{main3} (i) holds under the additional assumption that $\gm \subset \gg_\e$.
\end{proposition}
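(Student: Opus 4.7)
The plan is to combine Stembridge's Proposition \ref{stem-prop} with a ``single-coset'' observation about the weights of $\gg_\nu$. Since $\gm\subset\gg_\e$ is an ordinary reductive Lie algebra, $\gs:=[\gm,\gm]$ is semisimple, and because $\gt=\gz(\gm)$ acts by the scalar $\nu$ on $\gg_\nu$, proving irreducibility as an $\gm$-module reduces to proving irreducibility as an $\gs$-module. I will write $\gh=\gt\oplus\gh_\gs$ with $\gh_\gs$ a Cartan subalgebra of $\gs$, and denote by $Q_\gs\subset\Lambda_\gs\subset\gh_\gs^*$ the root and weight lattices of $\gs$.

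The heart of the argument will be the lattice identity $Q_\gg\cap\ker\pi=Q_\gm$, where $Q_\gg:=\ZZ\Delta$ and $Q_\gm:=\ZZ\Delta_\gm$. To prove it, I invoke Theorem \ref{main2} to choose a base $\Sigma$ of $\Delta$ containing a base $I$ of $\Delta_\gm$; any $v\in Q_\gg\cap\ker\pi$ can then be written $v=\sum_{\sigma\in\Sigma}n_\sigma\sigma$ with $n_\sigma\in\ZZ$, and applying $\pi$ together with the linear independence of $\pi(\Sigma)\setminus\{0\}$ (a base of $R$ by Theorem \ref{main2}) forces $n_\sigma=0$ for $\sigma\notin I$, so $v\in\ZZ I\subset Q_\gm$. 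Restricting to $\gh_\gs$, this yields $(\alpha-\beta)|_{\gh_\gs}\in Q_\gs$ whenever $\alpha,\beta\in\pi^{-1}(\nu)\cap\Delta$, so the $\gh_\gs$-weights of $\gg_\nu$ all lie in a single coset $c\in\Lambda_\gs/Q_\gs$.

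With the single-coset claim in hand, I decompose $\gg_\nu=V_1\oplus\cdots\oplus V_k$ into irreducible $\gs$-summands, each with support in $c$. Proposition \ref{stem-prop} supplies a distinguished weight $\lambda_c\in c$ that appears in every $V_i$, so the $\gh_\gs$-weight space of $\gg_\nu$ at $\lambda_c$ has dimension at least $k$. On the other hand, since $\gh=\gt\oplus\gh_\gs$, this weight space coincides with $\gg_\mu$ for the unique $\mu\in\gh^*$ determined by $\mu|_\gt=\nu$ and $\mu|_{\gh_\gs}=\lambda_c$, and every root space of the reductive superalgebras allowed by Definition \ref{reductive} is at most one-dimensional. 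Therefore $k\leq 1$, and $\gg_\nu$ is irreducible.

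The main obstacle is the lattice identity $Q_\gg\cap\ker\pi=Q_\gm$; once that is established, Stembridge's theorem and the one-dimensionality of root spaces combine in an essentially formal way to yield irreducibility. A minor subtlety worth flagging is that the distinguished weight $\lambda_c$ produced by Stembridge is automatically a root of $\gg$ (since each $V_i$ is nonzero), guaranteeing that $\gg_\mu$ above is nonzero and hence exactly one-dimensional.
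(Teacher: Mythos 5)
Your proof is correct and follows essentially the same route as the paper: reduce to $\gs=[\gm,\gm]$, observe that the $\gh_\gs$-weights of $\gg_\nu$ lie in a single coset of $\Lambda_\gs/Q_\gs$ and that the weight spaces are one-dimensional, and then use Stembridge's distinguished weight to rule out two irreducible summands. The only difference is that you spell out the single-coset claim via the lattice identity $Q_\gg\cap\ker\pi=Q_\gm$, which the paper simply asserts.
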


\begin{proof}
Let $\gs=[\gm,\gm]$. Then $\gs$ is a semisimple Lie algebra,
$\gg_\nu$ is an $\gs$-module which is irreducible if and only if 
$\gg_\nu$ is an irreducible $\gm$-module, since $\gt$
acts semisimply on it and $\gm=\gs\oplus \gt$. 
Fix  a Cartan
subalgebra $\gh_\gs$ of $\gs$. 
Notice that all the weights of $\gg_\nu$, with respect
to $\gh_\gs$, belong to the same coset
of $\Lambda/Q$ and that the weight spaces of  $\gg_\nu$ are all
one dimensional. Moreover, $\gg_\nu$ is a semisimple $\gs$-module.
Assuming that it is not irreducible, we would conclude that it contains (at least) two
simple submodules $W'$ and $W''$.
Then, by Proposition \ref{stem-prop}, there would be $\lambda_i$ which is a weight space of both $W'$ and $W''$.
This contradicts
the fact that all the weight spaces of $\gg_\nu$ are one dimensional.
\end{proof}

\begin{remark} 
In the case when $\gg$ is a Lie algebra, Proposition \ref{main-even} provides a proof of Theorem \ref{main3} (i) which is different 
from Kostant's proof. Moreover, the proof above also establishes the analogous result of Greenstein, \cite{G}.
\end{remark}

\begin{remark} 
Notice that the spaces $\gg_\nu$ are pairwise non-isomorphic 
as $\gm$-modules, but not necessarily as $\gs$-modules. The simplest 
example is provided by the decomposition of the adjoint representation of $\gs\gl_3$ 
as a module over a subalgebra of $\gs\gl_3$ isomorphic to $\gs\gl_2$ and generated by 
root spaces.
\end{remark}

\begin{remark} \label{rem-at}
The hypothesis $\gt=\gz(\gm)$ is essential for the irreducibility of the $\gm$-modules $\gg_\nu$.  
A simple example when $\gg_\nu$ fail to be irreducible if $\gt \neq \gz(\gm)$ is provided by
taking $\gg = \gs\gl_3$ and $\gt = \CC \, t$, where $t = (E_{11} - E_{33})$. Then $\gm = \gh$, the Cartan subalgebra of
$\gg$ consisting of diagonal matrices and $\gt \neq \gz(\gm) = \gm$. Moreover, decomposition \eqref{krs} in this case becomes
$\gg = \gg_{-2} \oplus \gg_{-1} \oplus \gm \oplus \gg_1 \oplus \gg_2,$
where both $\gg_{-1}$ and $\gg_1$ are  two-dimensional and hence reducible $\gm$-modules.
\end{remark}

\np
We are now ready to prove Theorem \ref{main3}.

\begin{proof}[{\bf Proof of Theorem \ref{main3}}] It is sufficient to prove the theorem when $\gg$ is a simple Lie algebra or is
isomorphic to one of the superalgebras
$\gsl(m|n)$ with $m\neq n$, $\ggl(m|m)$ for $m \geq 1$, $\gosp(m|2n)$, $\D(2,1;a)$, $\G(3)$, or $\F(4)$ and throughout the 
proof we will work under this assumption.

\np
{\bf (i)} Proposition \ref{main-even} deals with the case when $\gg$ is a simple Lie algebra.

\np Next we consider the case when $\gg$ is isomorphic to one of the superalgebras 
$\gsl(m|n)$ with $m\neq n$, $\ggl(m|m)$ for $m \geq 1$, or $\gosp(m|2n)$. The proof in this case 
follows from the observation that $\Deo$ is isomorphic to the Kostant root system $\tilde{\Delta}/\langle I \rangle$ for 
an appropriate simple Lie algebra $\tilde{\gg}$ and the funtoriality of the graphs associated with root systems.
Here are the details. 

\np
We say that two Kostant root systems $R'$ and $R''$ are isomorphic if there is an isomorphism $\varphi: V' \longrightarrow V''$, where 
$V'$ and $V''$ are the vector spaces spanned by $R'$ and $R''$ respectively, which restricts to a bijection between $R'$ and $R''$.
Comparing the roots of the Lie superalgebras, \cite{Ka}, with the Kostant roots of classical simple Lie algebras, \cite{DR},
we see that $\Deo$ is isomorphic to $\tilde{\Delta}/\langle I \rangle$, where
\begin{enumerate}
\item[(a)] if $\gg \cong \gsl(m|n)$ or $\gg \cong \ggl(m|m)$, then $\Delta$ is isomorphic to $\tilde{\Delta}$ with $\tilde{\gg} = \gsl_{m+n}$ or 
$\tilde{\gg} = \gsl_{2m}$ respectively;
\item[(b)] if $\gg \cong \gosp(2m|2n)$, then $\Delta$ is isomorphic to $\tilde{\Delta}/\langle I \rangle$ with $\tilde{\gg} = \gso_{2N}$ for appropriate $N$ and $I$
(this case is referred as ``type D, case I'' in \cite{DR});
\item[(c)] if $\gg \cong \gosp(2m+1|2n)$, then $\Delta$ is isomorphic to $\tilde{\Delta}/\langle I \rangle$ with $\tilde{\gg} = \gso_{2N+1}$ for appropriate $N$ and $I$
(this case is referred as ``type B, case I'' in \cite{DR}).
\end{enumerate}
Note that the choices for $\tilde{\Delta}$ and $I$ above are not unique.

\np 
Let now that $R$ be a Kostant root system of $\gg$ and assume that $\Deo \cong \tilde{\Delta}/\langle I \rangle$ as above. 
Then Proposition \ref{simple3} implies that there exist a base $\tilde{\Sigma}$ of $\tilde{\Delta}$ and subsets $I \subset J \subset \tilde{\Sigma}$ 
for which diagram \eqref{eq460} takes the form

\[
\xymatrix{
         &  \tilde{\Delta} \ar[dr]^{\pi_J} \ar[dl]_{\pi_I}  & \\
\Deo = \Delta/\langle I \rangle   \ar[rr]^{\pi_{I,J}} &  &  {\Delta/\langle J \rangle} = \Ro\,.}
\]
Let $\nu \in R$. Then $\tilde{\gg}_\nu$ is irreducible and, by Proposition \ref{conn-graph2}, the graph $\Gamma_{\tilde{\Delta}, \tilde{\Sigma}, J}^\nu$
is connected. Hence, by Proposition  \ref{conn-graph3}, the graph  $\Gamma_{\Delta, \tilde{\Sigma}/\langle I \rangle, J/I}^\nu$ is connected too. Finally, applying 
Proposition \ref{conn-graph2} again we conclude that $\gg_\nu$ is irreducible thus completing the proof in this case. 

\np
When $\gg$ is isomorphic to one of the superalgebras $\D(2,1;a)$, $\G(3)$, or $\F(4)$, the irreducibility of
$\gg_\nu$ is a direct verification which, with a little bit of care, can be carried out by hand. We provide the details of
this calculation in the Appendix.

\np
{\bf(ii)} Let $\mu, \nu$ and $\mu + \nu \in R$. Since $\gg_{\mu + \nu}$ is an irreducible $\gm$-module, it suffices to prove that
$[\gg_\mu, \gg_\nu] \neq 0$.

\np
First we consider the case when  $\mu$  is primitive, i.e., $k \mu \in R$ implies $|k| \geq 1$. 
By Corollary \ref{cor3.5} we may choose a base $S$ of $R$ such that $\mu \in S$. Fix a base $\Sigma$ of $\Delta$ and $I \subset \Sigma$
such that $\Ro = \Delta/\langle I \rangle$ and $S = \Sigma/\langle I \rangle$. Let $\alpha \in \Delta$ be such that $\pi_I(\alpha) = \mu$ and
let $J := I \cup \{\alpha\}$. Lemma \ref{graph-lemma} applies and we conclude that any edge of $\Gamma_{R, S, I}^\tau$
lifts to an edge of $\Gamma_{\Delta, \Sigma, J}^\tau$, where $\tau := \pi_{IJ}(\nu) = \pi_{IJ}(\mu+\nu)$. The assumption that
$\mu, \nu$, and $\mu + \nu \in R$ implies that $\nu$ and $\mu + \nu$ are vertices of $\Gamma_{R, S, I}^\tau$ connected by an edge labeled by $\mu$.
This edge lifts to an edge labeled by $\alpha$ connecting two vertices of $\Gamma_{\Delta, \Sigma, J}^\tau$ -- one in 
$\Gamma_{\Delta, \Sigma, I}^\nu$ and another one in $\Gamma_{\Delta, \Sigma, I}^{\mu + \nu}$. Say, 
$\beta \in \Gamma_{\Delta, \Sigma, I}^\nu$ and $\gamma \in \Gamma_{\Delta, \Sigma, I}^{\mu + \nu}$. In other words, we have $\alpha, \beta, \gamma \in \Delta$
such that $\alpha + \beta= \gamma$ and $\pi_I(\alpha) = \mu, \pi_I(\beta) = \nu$, and $\pi_I(\gamma) = \mu+ \nu$. Since, by Proposition \ref{prop2.4} (ii),
$[\gg_\alpha, \gg_\beta] = \gg_{\gamma} \neq 0$, we conclude that
\[
[\gg_\mu, \gg_\nu] \supset [\gg_\alpha, \gg_\beta] = \gg_{\gamma} \neq 0,
\]
which completes the proof in this case.

\np
The case when $\nu$ is primitive is treated in the same way. Before we proceed to the case when neither $\mu$ nor $\nu$ is primitive, we need to introduce
some notation.

\np
Let $\Sigma$ be a base of $\Delta$
and let $\alpha \in \Sigma$. For $s \in \ZZ$, consider the set
\[\Delta[\alpha,s] := \{ \beta \in \Delta \, | \, {\text{ the coefficient of }} \alpha {\text { in the }} \Sigma- {\text{decomposition of }} \beta {\text { is divisible by }} s\}.\]
The axiomatic description of the root systems of Lie superalgebras in \cite{Se1} implies that $\Delta[\alpha,s]$ is a root system itself.
If $\Ro = \Delta/\langle I \rangle$ is a Kostant root system, set $R[\alpha, s] := R \cap \Delta[\alpha,s]$.
Notice that, if  $\alpha \not \in I$, then $I$ is contained in a base of $\Delta[\alpha,s]$ because every element of $\Delta$ 
which is in the span of $I$ belongs to $\Delta[\alpha,s]$. Moreover, $\Ro[\alpha, s] = \Delta[\alpha, s]/\langle I \rangle$.

\np
Assume now that $\mu = s_1 \mu'$ and $\nu = s_2 \nu'$, where $\mu'$ and $\nu'$ are primitive. Fix a base $\Sigma$ of $\Delta$ and $I \subset \Sigma$
such that $\Ro = \Delta/\langle I \rangle$ and $\alpha, \beta, \gamma \in \Delta$ with 
$\pi_I(\alpha) = \mu, \pi_I(\beta) = \nu, \pi_I(\gamma) = \mu+ \nu$. We may assume that both $\mu$ and $\nu$
are positive with respect to $\Sigma/\langle I \rangle$ and hence $s_1, s_2 \in \ZZ_{>0}$.
The assumption $\mu = s_1 \mu'$ and $\nu = s_2 \nu'$ implies that, for every $\theta \in \Sigma \setminus I$, 
$s_1$ and $s_2$ divide the respective coefficients $c_1^\theta$ and $c_2^\theta$ of $\theta$ in
the decompositions of $\alpha$ and $\beta$ with respect to the base $\Sigma$. Assume first that, for some $\theta \in  \Sigma \setminus I$, 
the coefficients $c_1^\theta$ and $c_2^\theta$ have a common divisor $d >0$ and
consider $\Delta[\theta, d]$. Its cardinality is smaller than that of $\Delta$ and $\mu, \nu, \mu+\nu \in R[\theta,d]$.
Hence an induction on the cardinality of $\Delta$, using the result for  $\mu, \nu, \mu+\nu \in R[\theta,d]$, 
implies that  $[\gg_\mu, \gg_\nu] \neq 0$. Assume now that, for every $\theta \in  \Sigma \setminus I$, 
$c_1^\theta$ and $c_2^\theta$ are coprime. This implies in particular that $c_1^\theta \geq s_1$, $c_2^\theta \geq s_2$, and $s_1$ and $s_2$ are coprime.
Then, for every $\theta \in  \Sigma \setminus I$, we have $c_3^\theta = c_1^\theta + c_2^\theta \geq s_1 + s_2 \geq 5$, where $c_3^\theta$ is the coefficient
of $\theta$ in the decompositions of $\gamma$ with respect to the base $\Sigma$. 

\np
A quick inspection of the highest roots of root systems shows that
only the root system of $\E_8$ allows coefficients greater than 4. In the labeling of \cite{B}, the highest root of $\E_8$ is
\[2 \alpha_1 + 3 \alpha_2 + 4 \alpha_3 + 6 \alpha_4 + 5 \alpha_5 + 4 \alpha_6 + 3 \alpha_7 + 2 \alpha_8\]
and, moreover, there is no root in which both $\alpha_4$ and $\alpha_5$ enter with coefficient 5. Hence, the only cases we have not reduced to a smaller
root system yet is when $I = \Sigma \setminus \{\alpha_4\}$ or $I = \Sigma \setminus \{\alpha_5\}$ and $\mu = 2 \mu'$ and $\nu = 3 \mu'$. In both cases 
it is easy to find roots $\alpha \in \pi_I^{-1}(\mu)$, $\beta \in \pi_I^{-1}(\nu)$ such that $\alpha + \beta$ is a root. This completes the proof.

\np
{\bf (iii)} Without lost of generality we may assume that $\nu$ is primitive. Fix a base 
$\Sigma$ of $\Delta$, $I\subset \Sigma$ such that $\Ro = \Delta/\langle I \rangle$, and
$\alpha \in \Sigma \setminus I$ with $\pi_I(\alpha) = \nu$. We have to consider separately two cases for $\mu$.

\np
Assume first that $\mu$ is not proportional to $\nu$. Set $J := I \cup \{\alpha\}$ and $\tau:= \pi_{IJ}(\mu) = \pi_{IJ}(\mu + j \nu)$, see diagram \eqref{eq460}. 
By part (i) above, $\Gamma_{\Delta, \Sigma, J}^\tau$ is connected and so is $\Gamma_{R, \Sigma/\langle I \rangle, \Sigma/\langle I\rangle}^\tau$
 by Proposition  \ref{conn-graph3}. Since the vertices of the latter graph are all elements of $R$ of the form $\mu + j \nu$ and the edges are labeled by $\nu$,
 we conclude that $\mu, \mu+ k \nu \in R$ implies that $\mu + j \nu$ for every $j$ between $0$ and $k$.
 
 \np
If $\mu$ is proportional to $\nu$, it is sufficient to prove that $k \nu \in R$ with $k \in \ZZ_{>0}$ implies $j \nu \in R$ for every $0 \leq j \leq k$.
Let $\gamma \in \pi_I^{-1}(k \nu)$. Then $\gamma$ is in the span of $J$ and the coefficient of $\alpha$ in the decomposition of $\gamma$ with respect to
$J \subset \Sigma$ equals $k$. Consider a sequence $\gamma = \gamma_1, \gamma_2, \ldots, \gamma_N$ as in Proposition \ref{prop2.4} (iii). 
Then the projection under $\pi_I$ of the set $\{\gamma_j \, | \, 1 \leq j \leq N+1\}$ is exactly the set $\{j \nu \, | \, 0 \leq j \leq k\}$.
\end{proof}

\np
We complete this section with a remark justifying why we consider $\ggl(m|m)$ instead of $\gpsl(m|m)$.

\begin{remark} \label{rem_psl} If $\gg = \gsl(m|m)$, then $\gg_\nu$ may be a reducible $\gm$-module.
Here is an example for $m \geq 3$ in the usual presentation of $\gsl(m|m)$ as $2m \times 2m$-matrices. 
Let $\gt$ be the span of $H_1:= E_{11}-E_{22}, H_2:= E_{33} + \ldots + E_{2m-3, 2m-3}$, and $H_3:=E_{2m-1, 2m-1} - E_{2m, 2m}$. 
Then $\gm = \gh + \gg'$, where $\gg' \cong \gsl(m-2|m-2)$ is the subalgebra of traceless $(m-2) \times (m-2)$-matrices indexed by $3 \leq i , j \leq 2m-3$.
It is immediate to verify that $\gt = \gz(\gm)$. Define $\nu \in \gt^*$ by $\nu(H_1) = 1$, $\nu(H_2) = 0$, and $\nu(H_3) = -1$. Then $\gg_\nu$ is spanned by 
$E_{1,2m-1}$ and $E_{2m, 2}$ and each of these vectors spans an $\gm$-submodule of $\gg_\nu$. We leave it to the reader to check that, after projecting,
we obtain an example for $\gg = \gpsl(m|m)$ as well.
\end{remark}

\section{Hermitian symmetric pairs and admissible systems} \label{sec_hermitian}

\np
In this section we provide an application of the results of section \ref{bases} to Hermitian symmetric pairs. 
If $\gg$ is a simple Lie algebra and $(\gg, \gk)$ is a non-compact
Hermitian symmetric pair, then $\gg$ admits a decomposition

\begin{equation} \label{eq3.7}
\gg=\gk \oplus \gp=\gk \oplus \gp^+ \oplus \gp^-,
\end{equation}
where $\gk$ is a reductive subalgebra of $\gg$ with one dimensional center $\gt$, $\gp^\pm$ are abelian subalgebras of $\gg$ which are also
irreducible $\gk$-modules. (For a detailed discussion on Hermitian symmetric pairs, see \cite{H-C}.) It is clear that $\gk = \gc(\gt)$ and $\gt = \gz(\gk)$.  Moreover, the $\gt$-roots of $\gg$ are $R = \{\pm \nu\}$ with $\gg^{\pm \nu} = \gp^\pm$. In other words,
the decomposition \eqref{eq3.7} is just a particular case of \eqref{krs}. By Theorem \ref{main2}, $R$ is the projection of a base $S$ of $\Delta$
and $\nu = \pi(\alpha)$ for some $\alpha \in S$. The fact that $R = \{\pm \nu\}$ implies that the coefficient of $\alpha$ in the decomposition of any positive root
of $\gg$ into a combination of simple roots must equal either $0$ or $1$. This condition is equivalent to the condition that 
the coefficient of $\alpha$ in the decomposition the highest root of $\gg$  equals $1$, providing the list of all non-compact Hermitian symmetric pairs $(\gg, \gk)$ 
in the case when $\gg$ is a simple Lie algebra. The compact Hermitian symmetric pairs are just the pairs $(\gg, \gg)$. 

\np 
In \cite{CFV} Carmeli, Fioresi, and Varadarajan 
introduced a generalization of Hermitian symmetric pairs for basic classical 
Lie superalgebras $\gg \not \cong \gpsl(m|m)$
(see also \cite{CF1}). More precisely, $(\gg, \gk)$ is a {\it Hermitian symmetric pair} corresponding to the Cartan decomposition

\begin{equation} \label{eq3.8} \gg = \gk \oplus \gp, \end{equation}
if the following conditions are satisfied:
\begin{enumerate}
\item[(i)] $\gk = \gk_\e \subset \gg_\e$ contains a Cartan subalgebra of $\gg$;
\item[(ii)] for every simple ideal $\gs$ of $\gg_\e$, $(\gs, \gk \cap \gs)$ is a Hermitian symmetric pair;
\item[(iii)] $\gp_\o = \gg_\o$. 
\end{enumerate}
The roots $\Delta_\gk$ of $\gk$ and $\Delta_\gp$ of $\gp$ are called respectively {\it compact} and {\it non-compact}.
A positive system $\Delta^+ \subset \Delta$ is {\it admissible} if 
\[ (\Delta_\gk + \Delta_\gp^+) \cap \Delta \subset \Delta_\gp^+ \quad {\text {and}} \quad (\Delta_\gp^+ + \Delta_\gp^+) \cap \Delta \subset \Delta_\gp^+.\]
The following proposition appears as Theorem 3.3 in \cite{CFV}; we provide a proof based on Theorem \ref{main2}.

\begin{proposition} \label{prop 3.9}
Every admissible positive system $\Delta_\e^+$ in $\Delta_\e$ can be extended to an admissible positive system $\Delta^+ \subset \Delta$.
\end{proposition}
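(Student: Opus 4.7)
The strategy is to produce $\Delta^+$ as the positive system defined by a real-linear functional on $\gh^*_\RR$ of the form $f=\varepsilon f_1+f_2$, in which $f_1$ realizes $\Delta_\e^+$ on $\Delta_\e$, $f_2=\bar\nu\circ\pi$ factors through the projection $\pi:\gh^*\to\gt^*$ associated with $\gt:=\gz(\gk)$, and $\varepsilon>0$ is chosen small enough that $f_2$ governs signs on non-compact roots while $f_1$ governs signs on compact ones.

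First I would verify that $\gk=\gc_\gg(\gt)$, so that the framework of Theorems \ref{main1}--\ref{main3} applies with $\gm=\gk$. The inclusion $\gk\subset\gc_\gg(\gt)$ is immediate; conversely, $\gc_{\gg_\e}(\gt)=\gk$ follows from condition (ii) applied to the classical Hermitian symmetric pairs $(\gs_i,\gk\cap\gs_i)$ of the simple ideals $\gs_i$ of $\gg_\e$, and no non-zero odd element can be $\gt$-fixed, because such an element would place an odd root space inside $\gk\subset\gg_\e$. In particular, $\ker\pi\cap\Delta=\Delta_\gk$ and $\pi(\alpha)\neq 0$ for every $\alpha\in\Delta_\o$.

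The crucial step, and the one I expect to be the main obstacle, is constructing $\bar\nu$. I would first define it on the subspace of $\gt^*_\RR$ spanned by $\pi(\Delta_{\gp_\e})$ by requiring $\bar\nu(\pi(\alpha))>0$ for every $\alpha\in\Delta_{\gp_\e}^+$, and then extend generically to $\gt^*_\RR$ so that $\bar\nu(\pi(\alpha))\neq 0$ for every $\alpha\in\Delta$; the generic extension is possible because $\pi(\alpha)\neq 0$ on $\Delta_\o$ leaves only finitely many hyperplanes to avoid. Well-definedness of the first step reduces to showing that any two $\alpha,\beta\in\Delta_{\gp_\e}$ with $\pi(\alpha)=\pi(\beta)$ have the same sign with respect to $\Delta_\e^+$. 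For this I would apply Theorem \ref{main3}(i) to the reductive Lie algebra $\gg_\e$ (whose centralizer of $\gt$ is still $\gk$) to conclude that $\gg_{\pi(\alpha)}\cap\gg_\e$ is an irreducible $\gk$-module; its weights are then connected by chains of additions of compact roots, and the admissibility condition $(\Delta_\gk+\Delta_\gp^+)\cap\Delta\subset\Delta_\gp^+$ propagates the positive sign along any such chain linking $\alpha$ to $\beta$.

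Finally, setting $\Delta^+:=\{\alpha\in\Delta:f(\alpha)>0\}$, I would check that $\Delta^+$ extends $\Delta_\e^+$ (immediate on compact roots where $f$ reduces to $\varepsilon f_1$, and on non-compact even roots where $f_2$ dominates with the sign prescribed by $\bar\nu$) and verify the two admissibility conditions. For $\gamma\in\Delta_\gk$ and $\alpha\in\Delta_\gp^+$ with $\alpha+\gamma\in\Delta$: the Cartan structure $[\gk,\gp]\subset\gp$ together with Proposition \ref{prop2.4}(ii) places $\alpha+\gamma$ in $\Delta_\gp$, while $f_2(\alpha+\gamma)=f_2(\alpha)>0$ forces $\alpha+\gamma\in\Delta^+$, hence in $\Delta_\gp^+$. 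For $\alpha,\beta\in\Delta_\gp^+$ with $\alpha+\beta\in\Delta$: the inequality $f_2(\alpha+\beta)=f_2(\alpha)+f_2(\beta)>0$ simultaneously forces $\pi(\alpha+\beta)\neq 0$ (hence $\alpha+\beta\notin\Delta_\gk$) and $f(\alpha+\beta)>0$, placing $\alpha+\beta\in\Delta_\gp^+$.
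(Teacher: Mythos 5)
Your strategy---realizing $\Delta^+$ by a functional $f=\varepsilon f_1+\bar\nu\circ\pi$ that is a small perturbation of one pulled back from $\gt^*$---is genuinely different from the paper's proof, which takes $I$ to be the set of compact simple roots of $\Delta_\e^+$, passes to the Kostant root system $\Delta/\langle I\rangle$, and lifts one of its bases through Theorem \ref{main2}. Most of your argument is sound: the consistency of $\bar\nu$ on $\pi(\Delta_\gp\cap\Delta_\e)$ via irreducibility of the even non-compact pieces and the admissibility of $\Delta_\e^+$ is correct, and the two admissibility checks at the end go through once one knows that $f_2$ is nonzero on every non-compact root and $\varepsilon$ is small enough that the sign of $f$ agrees with that of $f_2$ wherever $f_2\neq 0$.

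There is, however, a genuine gap at exactly the point the paper singles out as the crucial observation. Your construction needs $\pi(\alpha)\neq 0$ for every odd root $\alpha$; equivalently, $\langle \Delta_\gk\rangle\cap\Delta_\o=\emptyset$, since $\ker\pi=\langle\Delta_\gk\rangle$ by the dimension count from Lemma \ref{lemma200}. Without this, $f_2$ vanishes on some odd root, the identity $\gc_\gg(\gt)=\gk$ fails, and both of your sign arguments for admissibility collapse. Your justification---that a $\gt$-fixed odd vector ``would place an odd root space inside $\gk\subset\gg_\e$''---is circular: a priori such a vector lies only in $\gc_\gg(\gt)$, and you cannot replace $\gc_\gg(\gt)$ by $\gk$ before proving that the two coincide, which is precisely what is at issue. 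Conditions (i)--(iii) of the definition do not formally force this property; the paper emphasizes that the absence of odd roots in $\langle\Delta_\gk\rangle$ ``is not true for a general reductive subalgebra $\gk$ of $\gg_\e$'' and is verified case by case against the classification of Hermitian symmetric pairs in \cite{CFV}. Supplying that verification (or an honest a priori argument for it) is what is needed to close your proof; once it is in place, the rest of your construction works.
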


\begin{proof} The crucial observation is that $\langle \Delta_\gk \rangle \cap \Delta = \Delta_\gk$. Note that, since $\gk$ is a reductive subalgebra of $\gg_\e$,
$\langle \Delta_\gk \rangle \cap \Delta_\e = \Delta_\gk$ is automatic. However, the fact that $\langle \Delta_\gk \rangle$ contains no odd roots follows from
the properties (i) and (ii) of the decomposition \eqref{eq3.8} and is not true for a general reductive subalgebra $\gk$ of $\gg_\e$. The proof of this property is a straightforward 
verification for each $\gk$, see \cite{CFV} for the list of all possible $\gk$.

\np 
Let $S_\e$ be the base of $\Delta_\e^+$. Since $\Delta_\e^+$ is admissible, $I := S_\e \cap \Delta_\gk$ is a base of $\Delta_\gk$ and hence 
$\langle \Delta_\gk \rangle = \langle I \rangle$. Let $S_R$ be a base of the Kostant system $\Ro := \Delta/\langle I \rangle$. 
Then $S_R$ is a projection of a base $S$ of $\Delta$ containing $I$ and the positive system $\Delta^+$ with base $S$ is admissible and contains $\Delta_\e^+$.
\end{proof}

\np
The root system $R$ above (and all of its bases) can be described explicitly for each Hermitian symmetric pair $(\gg, \gk)$. As a consequence, it is easy to 
obtain quickly all admissible positive systems $\Delta^+ \subset \Delta$ extending a given admissible system $\Delta_\e^+ \subset \Delta_\e$. Indeed, it is sufficient
to provide a list of all positive systems $R^+ \subset R$ extending $R_\e^+ \subset R_\e$, where $R_\e := \pi(\Delta_\e) \backslash \{0\}$ and 
$R_\o := \pi(\Delta_\o)$.
Below we provide the complete list of Kostant root systems $R$ and their positive systems $R^+$ corresponding to Hermitian symmetric pairs $(\gg,\gk)$, 
cf. the proof of Theorem 3.3 in \cite{CFV}. In most cases we use natural parametrizations of the positive systems $R^+$ and $R_\e^+$ by the elements of 
symmetric or dihedral groups of small order; depending on the particular context, we denote a group of order 2 by $\cS_2$ or $\cZ_2$. 

\point 
If $\gg = \gsl(m|n)$ with $m\neq n$ and $(\gg,\gk)$ is a  Hermitian symmetric pair, 
then $\gk$ admits a real form isomorphic to $\gsu(p,q) \oplus \gsu(r,s) \oplus \gu(1)$ where $p+q = m, r+s = n$
and we allow $p,q,r,$ or $s$ to take value zero. We may (and will) assume that $0 \leq p \leq q \leq m, 0 \leq r \leq s \leq n$. 

\np
If both $p$ and $q$ are nonzero, then 
$\gk = (\gsl_p \oplus \gsl_q \oplus \CC) \oplus  (\gsl_r \oplus \gsl_s \oplus \CC) \oplus \CC$ and 
\[R_\e = \{\pm(\bar{\vep}_1 - \bar{\vep}_2), \pm(\bar{\delta}_1 - \bar{\delta}_2)\}, \quad R_\o = \{ \pm(\bar{\vep}_i - \bar{\delta}_j) \, | \, 1 \leq i, j \leq 2\}.\]
The positive systems $R^+$ are indexed by elements of the symmetric group $\cS_4$ permuting 
$\bar{\vep}_1,  \bar{\vep}_2, \bar{\delta}_1 , \bar{\delta}_2$
 and the positive systems $R_\e^+$ are indexed by elements of
$\cS_2 \times \cS_2 \subset \cS_4$. Hence the positive systems $R^+$ extending a given $R_\e^+$ are indexed by the cosets in $\cS_4/\cS_2 \times \cS_2$.
In particular, there are 6 such extensions.

\np
If $p=0$ but $q \neq 0$, then 
$\gk = \gsl_m  \oplus  (\gsl_r \oplus \gsl_s \oplus \CC) \oplus \CC$ and 
\[R_\e = \{\pm(\bar{\delta}_1 - \bar{\delta}_2)\}, \quad R_\o = \{ \pm(\bar{\vep} - \bar{\delta}_j) \, | \,  j = 1, 2\}.\]
The positive systems $R^+$ are indexed by elements of the symmetric group $\cS_3$ and the positive systems $R_\e^+$ are indexed by elements of
$\cS_2  \subset \cS_3$. Hence the positive systems $R^+$ extending a given $R_\e^+$ are indexed by the cosets in $\cS_3/\cS_2$.
In particular, there are 3 such extensions.

\np
The case when $p \neq 0$ but $q = 0$ is analogous to the case when $p=0$ but $q \neq 0$.

\np
If $p = q =0$, then 
$\gk = \gsl_m  \oplus  \gsl_n \oplus \CC$ and 
\[R_\e = \emptyset, \quad R_\o = \{ \pm(\bar{\vep} - \bar{\delta}) \}.\]
The positive systems $R^+$, and, as a tautology,  the positive systems 
$R^+$ extending a given $R_\e^+$, are indexed by elements of the symmetric group $\cS_2$. In particular, there are 2 such extensions.

\point 
If $\gg = \gosp(2m+1 | 2n)$, $m > 0$, and $(\gg,\gk)$ is a  Hermitian symmetric pair, 
then $\gk$ admits a real form isomorphic to $\gso_\RR(2, 2m-1) \oplus \gsp_n(\RR)$ and hence 
$\gk \simeq (\gsl_2 \oplus \gso_{2m-1} \oplus \CC) \oplus (\gsl_n \oplus \CC)$. Thus
\[R_\e = \{\pm \bar{\vep}, \pm 2\bar{\delta}\}, \quad R_\o = \{ \pm \bar{\vep} \pm \bar{\delta}, \pm \bar{\delta}\}.\]
The positive systems $R^+$ are indexed by elements of the dihedral group $\cD_4$ acting on the square with vertices $\{\pm \bar{\vep} \pm \bar{\delta}\}$
(or, equivalently, the square with vertices $\{\pm \bar{\vep}, \pm \bar{\delta}\}$).
The positive systems $R_\e^+$ are indexed by elements of
$\cZ_2 \times \cZ_2 \subset \cD_4$. Hence the positive systems $R^+$ extending a given $R_\e^+$ are indexed by the cosets in $\cD_4/\cZ_2 \times \cZ_2$.
In particular, there are 2 such extensions.

\point 
If $\gg = \gosp(1 | 2n)$ and $(\gg,\gk)$ is a  Hermitian symmetric pair, 
then $\gk$ admits a real form isomorphic to $\gsp_n(\RR)$ and hence 
$\gk \simeq \gsl_n \oplus \CC$. Thus
\[R_\e = \{ \pm 2\bar{\delta}\}, \quad R_\o = \{ \pm \bar{\delta}\}.\]
The positive systems $R^+$ are indexed by elements of the group $\cZ_2$ as are the positive systems $R_\e^+$. 
Hence there is a unique positive system $R^+$ extending a given $R_\e^+$.

\point
 If $\gg = \gosp(2 | 2n)$ and $(\gg,\gk)$ is a  Hermitian symmetric pair, 
then $\gk$ admits a real form isomorphic to $\gso_\RR(2) \oplus \gsp_n(\RR)$ or 
a real form isomorphic to $\gso_\RR(2) \oplus \gsp(n)$. In the former case 
$\gk \simeq \CC \oplus (\gsl_n \oplus \CC)$ and in the latter $\gk \simeq \CC \oplus \gsp_n$. 

\np 
When $\gk \simeq \CC \oplus (\gsl_n \oplus \CC)$, we have
\[R_\e = \{ \pm 2\bar{\delta}\}, \quad R_\o = \{ \pm \bar{\vep} \pm \bar{\delta}\}.\]
There are 6 positive systems $R^+$ and 2 positive systems $R_\e^+$.
Hence there are 3 positive systems $R^+$ extending a given $R_\e^+$.

\np 
When $\gk \simeq \CC \oplus \gsp_n$, we have
\[R_\e =\emptyset, \quad R_\o = \{ \pm \bar{\vep}\}.\]
The positive systems $R^+$, and, as a tautology,  the positive systems 
$R^+$ extending a given $R_\e^+$, are indexed by elements of the  group $\cZ_2$. In particular, there are 2 such extensions.

\point 
If $\gg = \gosp(2m | 2n)$, $m > 1$, and $(\gg,\gk)$ is a  Hermitian symmetric pair, 
then $\gk$ admits a real form isomorphic to $\gso_\RR(2, 2m-2) \oplus \gsp_n(\RR)$ or 
a real form isomorphic to $\gso^*(2m) \oplus \gsp(n)$. In the former case 
$\gk \simeq (\gso_{2m-2} \oplus\CC) \oplus (\gsl_n \oplus \CC)$ and in the latter $\gk \simeq (\gsl_m \oplus\CC) \oplus (\gsl_n \oplus \CC)$. 

\np 
When $\gk \simeq (\gso_{2m-2} \oplus\CC) \oplus (\gsl_n \oplus \CC)$, we have
\[R_\e = \{\pm \bar{\vep}, \pm 2\bar{\delta}\}, \quad R_\o = \{ \pm \bar{\vep} \pm \bar{\delta}, \pm \bar{\delta}\}\]
and when $\gk \simeq (\gsl_m \oplus\CC) \oplus (\gsl_n \oplus \CC)$, we have
\[R_\e = \{\pm 2 \bar{\vep}, \pm 2\bar{\delta}\}, \quad R_\o = \{ \pm \bar{\vep} \pm \bar{\delta}\}.\]
In both cases the positive systems $R^+$ are indexed by elements of the dihedral group $\cD_4$ and the positive systems $R_\e^+$ are indexed by elements of
$\cZ_2 \times \cZ_2 \subset \cD_4$. Hence the positive systems $R^+$ extending a given $R_\e^+$ are indexed by the cosets in $\cD_4/\cZ_2 \times \cZ_2$.
In particular, in each case there are 2 such extensions.

\point
If $\gg = \D(2,1;\alpha)$ and $(\gg,\gk)$ is a  Hermitian symmetric pair, 
then $\gk$ admits a real form isomorphic to $\gsl_2(\RR) \oplus \gsl_2(\RR) \oplus \gsl_2(\RR)$ or 
a real form isomorphic to $\gsl_2(\RR) \oplus \gsu(2) \oplus \gsu(2)$. In the former case 
$\gk \simeq \CC^3$ is a Cartan subalgebra of $\gg$ and in the latter $\gk \simeq \gsl_2 \oplus \gsl_2 \oplus \CC$. 

\np
When $\gk \simeq \CC^3$ is a Cartan subalgebra, the Kostant roots are just the usual roots
\[R_\e = \{\pm 2 \bar{\vep}_1, \pm 2 \bar{\vep}_2, \pm 2 \bar{\vep}_3\}, \quad R_\o = \{\pm  \bar{\vep}_1 \pm  \bar{\vep}_2 \pm  \bar{\vep}_3\}.\]
There are 32 positive systems $R^+$ and 8 positive systems $R_\e^+$. Hence there are 4 positive systems $R^+$ extending a given $R_\e^+$.

\np
When $\gk \simeq \gsl_2 \oplus \gsl_2 \oplus \CC$, we have 
\[R_\e = \{\pm 2 \bar{\vep}\}, \quad R_\o = \{\pm  \bar{\vep}\}.\]
Both the positive systems $R^+$ and $R_\e^+$ are indexed by $\cZ_2$ and every positive system $R_\e^+$ extends to a unique positive system $R^+$. 

\point
If $\gg = \F(4)$ and $(\gg,\gk)$ is a  Hermitian symmetric pair, 
then $\gk$ admits a real form isomorphic to $\gsu_2 \oplus \gso_\RR(2,5)$ or 
a real form isomorphic to $\gsl_2(\RR) \oplus \gso_\RR(7)$. In the former case 
$\gk \simeq \gsl_2 \oplus (\gso_5 \oplus \CC)$ and in the latter $\gk \simeq \CC \oplus \gso_7$. 

\np 
When $\gk \simeq \gsl_2 \oplus (\gso_5 \oplus \CC)$, we have
\[R_\e = \{\pm \bar{\vep}\}, \quad R_\o = \{ \pm \frac{1}{2}\bar{\vep}\}\]
and when $\gk \simeq \CC \oplus \gso_7$, we have
\[R_\e = \{\pm \bar{\delta}\}, \quad R_\o = \{ \pm \frac{1}{2}\bar{\delta}\}.\]
In both cases both the positive systems $R^+$ and $R_\e^+$ are indexed by $\cZ_2$ and every positive system $R_\e^+$ extends to a unique positive system $R^+$. 

\point
If $\gg = \G(3)$ and $(\gg,\gk)$ is a  Hermitian symmetric pair, 
then $\gk$ is isomorphic to $\gk \simeq  \G_2 \oplus \CC$. We have 
\[R_\e = \{\pm 2\bar{\delta}\}, \quad R_\o = \{ \pm \bar{\delta}\}.\]
Both the positive systems $R^+$ and $R_\e^+$ are indexed by $\cZ_2$ and every positive system $R_\e^+$ extends to a unique positive system $R^+$.

\np
Note finally that the discussion above in combination with Theorem \ref{main3} recovers Lemma 3.4 of \cite{CFV}.

\section*{Appendix. Irreducibility of $\gg_\nu$ for 
$\gg \cong \D(2,1;\alpha)$, $\F(4)$, and $\G(3)$} \label{app}

\np
The irreducibility of $\gg_\nu$  for the exceptional Lie superalgebras
amounts to a direct check. For completeness we list the bases
of $\D(2,1;\alpha)$, $\F(4)$, $\G(3)$
up to Weyl group equivalence ($W$-equivalence for short) and we list the routine checks 
that we leave to the reader. Taking into account
that the case of $\gm$ even is taken care by Proposition \ref{main-even},
our strategy is as follows:

\begin{itemize}
\item[Step 1:] Pick $I_1=\{\alpha\}$, $\alpha$ odd, in a base of $\Delta$
 and consider
$\pi_{I_1}:\Delta \longrightarrow R_1=\Delta/\langle I_1 \rangle$. Check that $\gg_\nu$
is irreducible for all $\nu \in R_1$.
Repeat this step for all possible choices of $I_1$.

\item[Step 2:] Enlarge $I_1$ to $I_2=\{\alpha,\beta\}$, where $\beta$ is chosen
in any possible way in the same base of $\Delta$ as $\alpha$.
Consider $\pi_{I_2}:\Delta \longrightarrow R_2=\Delta/\langle I_2 \rangle$. 
Check that $\gg_\nu$ is irreducible for all $\nu \in R_2$.
Repeat this step for all possible choices of $I_2$.

\item[Step 3:] If $\gg = \F(4)$, enlarge $I_2$ to $I_3=\{\alpha,\beta, \gamma\}$, where $\gamma$ is chosen
in any possible way in the same base of $\Delta$ as $I_2$.
Consider $\pi_{I_3}:\Delta \longrightarrow R_3=\Delta/\langle I_3 \rangle$. 
Check that $\gg_\nu$ is irreducible for all $\nu \in R_3$.
Repeat this step for all possible choices of $I_3$.
\end{itemize}

\np
Since the rank of exceptional Lie superalgebras is at most $4$ this
procedure will terminate after at most $3$ steps. Notice that our
choices can always be made up to $W$-equivalence.

\np
We now provide the list of all possible sets $I_1, I_2$ and $I_3$ as above, leaving the necessary
checks to the reader.

\np
{\bf 7.1.}
\underline{$\gg = \D(2,1;\alpha)$}. In this case $\Delta_\e=\{\pm 2\delta_i\}$, $\Delta_\o=
\{\pm \delta_1\pm \delta_2 \pm \delta_3\}$, $i,j=1,2,3$, and
$\gg_\e=A_1 \oplus A_1 \oplus A_1$.
Because of the Weyl group equivalence, signs are immaterial.
Notice the symmetry in the root expressions with respect to a permutation
of the indices (which is not a Weyl equivalence). 
 With this in mind, there is only one base to consider:
$$
S=\{\delta_1-\delta_2-\delta_3,\, 2\delta_2,\, 2\delta_3\}.
$$
The  only choice for $I_1$ is $I_1=\{\delta_1-\delta_2-\delta_3\}$. 
After renaming $\delta_i$, the only choice for $I_2$ is 
$I_2=\{\delta_1-\delta_2-\delta_3, 2\delta_1\}$.

\np
{\bf 7.2.} \underline{$\gg =\G(3)$}. In this case $\Delta_\e=\{\ep_i - \ep_j,\, \pm \ep_i,\, \pm 2\delta\}$,
$\Delta_\o=\{\pm \ep_i\pm \delta,\, \pm \delta)\}$, $i,j=1,2,3$,
$\ep_1+ \ep_2 + \ep_3=0$, and 
$\gg_\e=G_2 \oplus A_1$. 
There are four inequivalent bases:
$$
\begin{array}{rl}
S_1&= \left\{\ep_2,\, \ep_3-\ep_2,\, \delta+\ep_1 \right\}; \\
S_2&=\left\{ \delta-\ep_3,\, \ep_3-\ep_2,\, -\delta-\ep_1 \right\}; \\ 
S_3&=\left\{  -\delta+\ep_3,\, \delta-\ep_2,\, \ep_2 \right\};\\ 
S_4&= \left\{ \ep_3-\ep_2,\, -\delta+\ep_2,\, \delta \right\}.
\end{array}
$$
It is sufficient to consider $I_1=\{\delta+\ep_1\}$. As for $I_2$,
there are the following possibilities (with some redundancies):
$$
\{\delta+\ep_1,\, \delta \pm \ep_2\}, \quad \{\delta+\ep_1, \, \alpha\},
$$
where $\alpha=\delta$, $\ep_1$, $\ep_2$, $\ep_1\pm \ep_2$, $\ep_2\pm\ep_3$.

\np
{\bf 7.3.}  \underline{$\gg = \F(4)$}. In this case $\Delta_\e=\{\pm \ep_i \pm \ep_j, \pm \ep_i, \pm \delta\}$,
$\Delta_\o=\{(1/2)(\pm \ep_1\pm \ep_2 \pm \ep_3 \pm \delta)\}$, $i,j=1,2,3$, and
$\gg_\e=B_3 \oplus A_1$. 
There are six inequivalent bases:
$$
\begin{array}{rl}
S_1&= \left\{ \frac{1}{2}(\ep_1+\ep_2+\ep_3+\delta), 
\ep_1-\ep_2, \ep_2-\ep_3, -\ep_1  \right\};\\
S_2&=\left\{ -\frac{1}{2}(\ep_1+\ep_2+\ep_3+\delta), 
\ep_1-\ep_2, \ep_2-\ep_3, \frac{1}{2}(-\ep_1+\ep_2+\ep_3+\delta) \right\};  \\
S_3&=\left\{ -\ep_1, \frac{1}{2}(\ep_1-\ep_2+\ep_3+\delta),  
 \ep_2-\ep_3, -\frac{1}{2}(-\ep_1+\ep_2+\ep_3+\delta) \right\}; \\
S_4&= \left\{ \frac{1}{2}(-\ep_1-\ep_2+\ep_3+\delta),  
 -\frac{1}{2}(\ep_1-\ep_2+\ep_3+\delta),
\frac{1}{2}(\ep_1+\ep_2-\ep_3+\delta),  \ep_1-\ep_2\right\}; \\
S_5&= \left\{ \frac{1}{2}(\ep_1+\ep_2-\ep_3-\delta),-\ep_1 ,
\delta, \ep_1-\ep_2\right\};\\
S_6&= \left\{\delta, \ep_2-\ep_3,-\frac{1}{2}(\ep_1+\ep_2-\ep_3+\delta),
 \ep_1-\ep_2\right\}.
\end{array}
$$
Up to $W$-equivalence, there is only one choice for $I_1$:
$$
I_1=\{\frac{1}{2}(\ep_1+\ep_2+\ep_3+\delta)\}. \qquad
$$
Examining which bases contain $I_1$ we have
the following choices for $I_2$ consisting of two odd roots
(with some redundancies): 
$$
\begin{array}{rl}
I_2^1&=\{ \frac{1}{2}(\ep_1+\ep_2+\ep_3+\delta), \,
\frac{1}{2}(-\ep_1+\ep_2+\ep_3+\delta) \}; \\
I_2^2&=\{\frac{1}{2}(\ep_1+\ep_2+\ep_3+\delta), \,
\frac{1}{2}(\ep_1+\ep_2-\ep_3-\delta)\};\\
I_2^3&=\{\frac{1}{2}(\ep_1+\ep_2+\ep_3+\delta), \,
\frac{1}{2}(\ep_1-\ep_2-\ep_3-\delta)\}; \\
I_2^4&= \{\frac{1}{2}(\ep_1+\ep_2+\ep_3+\delta), \,
\frac{1}{2}(-\ep_1-\ep_2-\ep_3+\delta)\}; \\
I_2^5&=\{\frac{1}{2}(\ep_1+\ep_2+\ep_3+\delta), \,
\frac{1}{2}(\ep_1-\ep_2-\ep_3+\delta)\}. \\
\end{array}
$$
Since $W$ permutes the indices, 
$I_2^1$ is equivalent to $I_2^3$ and $I_2^2$ is equivalent to $I_2^5$.

\np
The choices for $I_2$ consisting of an even and an odd root are:
$$
\{\frac{1}{2}(\ep_1+\ep_2+\ep_3+\delta), \, \alpha \},
\qquad \alpha \in \{\delta, \ep_1, \ep_1 \pm \ep_2\}.
$$
Notice that the case when $\alpha=\ep_1$ is the same as $I_2^1$, 
 the case when $\alpha=\ep_1+\ep_2$ is the same as $I_2^2$, and the case when $\alpha=\delta$ is the same as  $I_2^2$,
so the only new case to verify is when $\alpha=\ep_1-\ep_2$.

\np
Finally, we determine the possible choices for $I_3$. There is only one set $I_3$ containing $3$ odd
roots --  the odd roots in $S_4$. The remaining possibilities
for $I_3$ are:
$$
I_2^i \cup \{\alpha\} \qquad  {\text{and}} \qquad
\{\frac{1}{2}(\ep_1+\ep_2+\ep_3+\delta), \alpha, \beta \},
$$
where $\alpha, \beta \in   \{\delta, \ep_1, \ep_1 \pm \ep_2 \}$.

\np
Verifying that the spaces $\gg_\nu$ are irreducible $\gm$-modules in the cases listed above is a tedious but straightforward calculation which we omit.


\begin{thebibliography}{[DFG]}

\bibitem[B]{B} 
N. Bourbaki, {\em Lie groups and Lie algebras}, Ch. 4--6, Springer-Verlag, 2008, 300 + xii pp.

\bibitem[CFV]{CFV} 
C. Carmeli, R. Fioresi, and V. S. Varadarajan, 
{\em Highest weight Harish-Chandra supermodules and 
their geometric realizations.},  Transform. Groups (2018), to appear.

\bibitem[CF]{CF1} 
M.-K. Chuah and R. Fioresi, 
{\em Hermitian real forms of contragredient Lie superalgebras},
J. Algebra {\bf{437}} (2015), 161-176.


\bibitem[DFG]{DFG}
I. Dimitrov, V. Futorny, and D. Grantcharov, {\em Parabolic sets of roots}, Contemp. Math. {\bf{499}} (2009), 61-74.

\bibitem[DR]{DR} 
I. Dimitrov and M. Roth, {\em Positive systems of Kostant roots}, Algebr. Represent. Theory {\bf{20}} (2017), 1365-1378.

\bibitem[G]{G}
J. Greenstein, {\em Extension of a theorem of Kostant for affine algebras}, J. Algebra {\bf{199}} (1998), 290-301.

\bibitem[H-C]{H-C}
Harish-Chandra, {\it Representations of semisimple Lie groups. IV}, Amer. J. Math {\bf{77}} (1955), 743-777.

\bibitem[Ka]{Ka} 
V. Kac, {\em Lie superalgebras}, Advances in Math. {\bf{26}} (1977), 8-96.

\bibitem[Ko]{Ko} 
B. Kostant, {\em Root systems for Levi factors and Borel--de Siebenthal theory}, 
{\em Symmetry and Spaces}, Progr. Math. {\bf{278}}, 129-152, Birkh\"{a}user Boston, Inc., Boston, MA, 2010. 

\bibitem[Se1]{Se1} 
V. Serganova, {\em On generalizations of root systems}, Comm. Algebra {\bf{24}} (1996), 4281-4296.  

\bibitem[Se2]{Se} 
V. Serganova, {\em Structure and representation theory of Kac-Moody superalgebras},  
{\em Highlights in Lie algebraic methods}, Progr. Math. {\bf{295}}, 65-102,  Birkh\"{a}user/Springer, New York, 2012. 


\bibitem[St]{St} 
J. R. Stembridge, {\em The partial order of dominant weights}, Advances in Math. {\bf{136}} (1998), 340-364.

\end{thebibliography}
\end{document}